\definecolor{azul}{RGB}{33,64,154}
\def \R {\mathbb{R}}
\def \dist {\mathrm{dist}}
\newtheorem{theorem}{Theorem}
\newtheorem{proposition}{Proposition}
\newtheorem{corollary}{Corollary}
\newtheorem{remark}{Remark}
\numberwithin{equation}{section}
\title[Infinity Laplacian equations with singular absorptions]{Infinity Laplacian equations with singular absorptions}
\author[D.J. Ara\'ujo]{Dami\~ao J. Ara\'ujo}
\address{UFPB, Department of Mathematics, Universidade Federal da Para\'iba, 58059-900, Jo\~ao Pessoa-PB, Brazil}{}
\email{araujo@mat.ufpb.br}
\author[G.S. S\'a]{Ginaldo S. S\'a}
\address{UFPB, Department of Mathematics, Universidade Federal da Para\'iba, 58059-900, Jo\~ao Pessoa-PB, Brazil}{}
\email{ginaldo.sa@academico.ufpb.br}
\begin{document}
\subjclass[2020]{Primary 35B65. Secondary 35J70, 35J75, 35D40, 35R35}
\keywords{Infinity Laplacian, singular PDEs, viscosity solutions, free boundary problems}

\begin{abstract}
In this work, we study regularity properties for nonvariational singular elliptic equations ruled by the infinity Laplacian. We obtain optimal $C^{1,\alpha}$ regularity along the free boundary. We also show existence of solutions, nondegeneracy properties and fine geometric estimates for the free boundary.
\end{abstract}

\date{\today}

\maketitle

\tableofcontents

%%%%%%%%%%%%%%%%%%%%%%%%%%%%%%%%%%%%%%%%%%%%%%
\section{Introduction}
%%%%%%%%%%%%%%%%%%%%%%%%%%%%%%%%%%%%%%%%%%%%%%
In this work, we study analytic and geometric properties of nonvariational elliptic equations with singular absorption terms, where the governing second-order operator is the infinity Laplace operator
$$
\Delta_\infty u := \sum\limits_{ij}D_{ij}uD_{i}uD_{j}u.
$$
This highly degenerate operator has received wide attention during the last three decades. This operator is strongly related to models which describe, for example, random tug-of-war games \cite{PSSW1} and mass transfer problems \cite{EG}. Infinity harmonic functions, {\it i.e.}, solutions of $\Delta_\infty u =0$, correspond to the best Lipschitz extension problem and the notion of comparison with cones, see \cite{A1,A2,ACJ,CEG}.

The lack of uniform ellipticity makes the mathematical study of models related to the infinity Laplacian more delicate. Existence and uniqueness results are well established, however the regularity of infinity harmonic functions remains one of the most challenging issues in the modern theory of nonlinear pdes. It is well known that infinity harmonic functions are locally Lipschitz, and the best regularity result to date was established by Evans and Smart, whom have proved differentiability everywhere \cite{ESm}. In two dimensions, Evans and Savin obtained $C^{1,\beta}$ regularity \cite{ES} for some $\beta$ universally small, see also \cite{S1}. The infinity harmonic function
$$
x^{4/3}-y^{4/3}, \quad (x,y) \in \mathbb{R}^2
$$
suggests the optimal regularity is H\"older continuity of the first order derivatives with exponent $1/3$. We also mention \cite{KZZ1,KZZ2}, where important results concerning planar sharp Sobolev regularity are obtained.
For the inhomogeneous infinity Laplace equation,
$$
\Delta_\infty u = f(x,u) \in L^\infty,
$$
existence and uniqueness of viscosity solutions of the Dirichlet problem have been established \cite{LW}, where, for a bounded source term $f$, solutions are Lipschitz continuous. In the case $f \in C^1$, everywhere differentiability has been established \cite{L}. Nonetheless, to the best of our knowledge, no further regularity is known.

Free boundary problems involving the infinity Laplacian have also been investigated. For the infinity-obstacle problem \cite{RTU}, solutions grow at the sharp rate $4/3$ near the contact set. In \cite{ALT}, the authors considered absorption terms $f(x,u) = u_+^\theta$, for $0 \leq \theta < 3$, obtaining, in particular, that nonnegative solutions are smoother along the boundary of the noncoincidence set $\partial\{u>0\}$, see also \cite{DT}.

The main goal of this work is to study geometric and analytic properties of nonnegative viscosity solutions of the following singular free boundary problem
\begin{equation}\label{P0}
\left\{
\begin{array}{ccl}
\Delta_\infty u = u^{-\gamma} & \mbox{in} & \Omega \cap \{u>0\} \\[0.15cm]
u = \varphi & \mbox{on} & \partial \Omega
\end{array}
\right.
\end{equation}
for parameters $0 \leq \gamma <1$. Here, $\Omega \subset \mathbb{R}^n$ is a bounded smooth domain and $\varphi \geq 0$ is a given smooth boundary data. Singular equations as in \eqref{P0} appear in several problems in engineering sciences and in contexts of simplified stationary models for fluids passing through a porous medium.
The Laplacian case, $\Delta u = u^{-\gamma}$, is fairly well understood. It appears as the Euler–Lagrange equation of the following non-differentiable functional
$$
\mathcal{J}_\gamma(u)=\int \frac{1}{2}|Du|^2 + u^{1-\gamma} \, dx .
$$
Regularity results for minimizers of $\mathcal{J}_\gamma$ has been studied in \cite{AP,GG1,GG2,P1,P2}. The nonvariational problem has been treated in \cite{AT} for a class of second order uniformly elliptic fully nonlinear operators. The pde satisfied in \eqref{P0} can be considered the intermediate case of the following free boundary problems: the infinity-obstacle problem, case $\gamma=0$ (see \cite{ALT,RTU} for variational and nonvariational approaches); the infinity-cavitation problem, case $\gamma=1$ (see \cite{ATU3,CF1,RST} and \cite{T2} for mixed singular structures).

%%%%%%%%%%%%%%%%%%%%%%%%%%%%%%
\subsection*{Main ideas and results}
The study of this type of free boundary problem has significant difficulties: (i) the nonvariational sense, where no measure-distributional structure is available; (ii) the source term blows up along the a priori unknown set $\partial\{u>0\}$. In order to circumvent theses issues, we shall consider viscosity solutions of the penalized problem
\begin{equation}\label{Pe}\tag{$P_\varepsilon$}
\left\{
\begin{array}{ccl}
\Delta_\infty u = \mathcal{B}_{\varepsilon}(u)\, u^{-\gamma} & \mbox{in} & \Omega \\[0.15cm]
u = \varphi & \mbox{on} & \partial \Omega,
\end{array}
\right.
\end{equation}
where the term $\mathcal{B}_\varepsilon(s)$ is a suitable approximation of $\chi_{\{s>0\}}$.
Our main contribution is to provide uniform oscillation estimates for viscosity solutions of \eqref{Pe}, denoted by $u_\varepsilon$, obtaining so $C^{1,\alpha}$ estimates at free boundary points of limiting solutions
of \eqref{P0} (Fig. \ref{fig1}), for
\begin{equation}\label{alpha}
\alpha = \frac{4}{3+\gamma}.
\end{equation}

\smallskip
We now state our first main result.

%%%THM1
\begin{theorem}[Optimal regularity at free boundary points]\label{mainthm}
Let $u$ be a limit solution of problem \eqref{P0}. For each subdomain $\Omega' \Subset \Omega$, there exist positive constants $C$ and $r_0$, depending only on $\gamma$, $\|u\|_{L^\infty(\Omega)}$, $\dist(\Omega',\partial\Omega)$ and dimension, such that, for points
$$
x \in \partial \{u >0\} \cap \, \Omega',
$$
there holds
\begin{equation}\label{optgrowth}
\sup_{B_r(x)}u \leq C\, r^\alpha
\end{equation}
for any $0< r \leq r_0$. Furthermore,
$$
\partial\{u>0\} \subset \{|Du| = 0\},
$$
which implies that $u$ is $C^{1,\frac{1-\gamma}{3+\gamma}}$ along $\partial\{u>0\}$.
\end{theorem}
%%%

By obtaining an entire radial supersolution for \eqref{Pe}, we provide nondegeneracy properties uniform on $\varepsilon$. Next, solutions for \eqref{P0} are limit of minimal Perron’s solutions of \eqref{Pe} (see Section \ref{singprob}).

%%%%%%%THM2
\begin{theorem}[Nondegeneracy estimates]\label{nondeglim} Let $u$ be a limit Perron’s solution of problem \eqref{P0}. There exists universal $c>0$, depending only on $\gamma$, such that for
$$
x \in \overline{\{u>0\}} \cap \Omega,
$$
there holds
\begin{equation}\label{nondeg1}
\sup_{B_r(x)}u \geq c\, r^\alpha,
\end{equation}
for any $0< r \leq \frac{1}{2}\dist(x,\partial\Omega)$.
\end{theorem}

%%%%%%%%%%
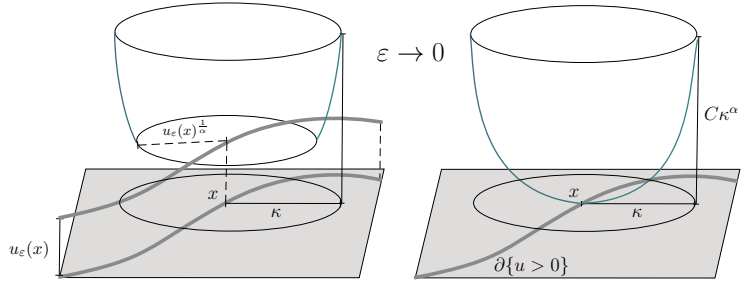
\begin{figure}[ht]
\centering
\psscalebox{0.6 0.6}
{
\begin{pspicture}(0,-2.2)(20.85587,4.658495)
\definecolor{colour0}{rgb}{0.8745098,0.8627451,0.8627451}
\definecolor{colour1}{rgb}{0.5411765,0.5411765,0.5411765}
\definecolor{colour2}{rgb}{0.23921569,0.44313726,0.5019608}
\definecolor{colour3}{rgb}{0.23137255,0.5058824,0.5411765}
\pspolygon[linecolor=black, linewidth=0.02, fillstyle=solid,fillcolor=colour0](10.69821,-1.4422625)(17.339062,-1.4459306)(17.891653,0.9644098)(11.280826,0.9644098)
\pspolygon[linecolor=black, linewidth=0.02, fillstyle=solid,fillcolor=colour0](2.8154514,-1.4353659)(9.456303,-1.4390341)(10.008895,0.9713064)(3.3980668,0.9713064)
\rput[bl](1.7,-1.0){\large{$u_\varepsilon(x)$}}
\rput[bl](5.1,1.65){\small{$u_\varepsilon(x)^{\frac{1}{\alpha}}$}}
\rput[bl](14.1,0.35){\Large{$x$}}
\psellipse[linecolor=black, linewidth=0.01, dimen=outer](6.5494037,4.0175676)(2.5089061,0.64092726)
\rput[bl](15.5,-0.1){\Large{$\kappa$}}
\psline[linecolor=black, linewidth=0.02, linestyle=dashed, dash=0.17638889cm 0.10583334cm](6.5244584,1.5521922)(6.497105,0.21901333)
\psline[linecolor=black, linewidth=0.02, linestyle=dashed, dash=0.17638889cm 0.10583334cm](9.928971,1.9870493)(9.913627,0.6990488)
\psbezier[linecolor=colour1, linewidth=0.08](2.7888398,-1.428636)(4.916617,-1.0142117)(4.7244763,-0.746617)(6.5124497,0.22906100576750305)(8.300423,1.2047391)(9.958078,0.7229817)(9.958078,0.7229817)
\psline[linecolor=black, linewidth=0.02, tbarsize=0.07055555cm 5.0]{|*-|*}(2.8257558,-0.13070036)(2.8104112,-1.3848171)
\psbezier[linecolor=colour2, linewidth=0.03](4.03857,3.9894524)(4.03857,3.045654)(4.2887907,1.9718623)(4.525546,1.5541070905946912)
\psellipse[linecolor=black, linewidth=0.01, dimen=outer](6.520831,1.6034484)(1.9993614,0.5603073)
\psellipse[linecolor=black, linewidth=0.01, dimen=outer](14.431942,3.9636796)(2.5090299,0.63599193)
\psbezier[linecolor=colour1, linewidth=0.08](10.671193,-1.4405869)(12.799075,-1.0293539)(12.606925,-0.76381975)(14.394986,0.2043453084973271)(16.183048,1.1725104)(17.840786,0.6944627)(17.840786,0.6944627)
\psline[linecolor=black, linewidth=0.01, tbarsize=0.07055555cm 5.0]{|*-|*}(14.379906,0.20285228)(16.88187,0.19973123)
\psbezier[linecolor=colour3, linewidth=0.03](16.95147,3.8411415)(16.741907,2.6340635)(16.820475,0.2848699)(14.380941,0.21138588971254307)
\psbezier[linecolor=colour2, linewidth=0.03](11.9176445,3.938383)(12.017237,3.208884)(11.870495,0.46837536)(14.336007,0.21265842181270272)
\psbezier[linecolor=colour1, linewidth=0.08](2.7948442,-0.11281727)(4.9226213,0.30681986)(4.9226213,0.7151624)(6.518454,1.5544366407022538)(8.114286,2.3937109)(9.976091,2.0049167)(9.976091,2.0049167)
\psbezier[linecolor=colour3, linewidth=0.03](9.056788,4.0157022)(9.056788,3.571071)(8.824964,2.0881622)(8.515565,1.622808830066017)
\psline[linecolor=black, linewidth=0.01, tbarsize=0.07055555cm 5.0]{|*-|*}(9.097235,3.9679344)(9.069053,0.21342728)
\rput[bl](6.1,0.3){\Large{$x$}}
\rput[bl](7.5,-0.1){\Large{$\kappa$}}
\rput[bl](17.1,2.0540297){\Large{$C\kappa^\alpha$}}
\psellipse[linecolor=black, linewidth=0.01, dimen=outer](14.445736,0.21885185)(2.460754,0.63599193)
\psline[linecolor=black, linewidth=0.01, tbarsize=0.07055555cm 5.0]{|*-|*}(16.979994,3.9058654)(16.924225,0.19273762)
\rput[bl](9.85,3.3719885){\huge{$\varepsilon \to 0$}}
\psline[linecolor=black, linewidth=0.02, dimen=outer, dash=0.17638889cm 0.10583334cm, tbarsize=0.07055555cm 5.0]{|*-|*}(6.497147,0.20974883)(9.040491,0.20662779)
\psline[linecolor=black, linewidth=0.02, linestyle=dashed, dash=0.17638889cm 0.10583334cm, tbarsize=0.07055555cm 5.0]{|*-|*}(4.55,1.5)(6.52,1.6)
\psellipse[linecolor=black, linewidth=0.01, dimen=outer](6.6043563,0.2257484)(2.460754,0.63599193)
\rput[bl](12.45,-1.4){\Large{$\partial\{u>0\}$}}
\end{pspicture}
}
\caption{The picture represents optimal growth estimates in the region $\{0<u_\varepsilon \lesssim \kappa^\alpha\} \cap B_\kappa(x)$ (Theorem \ref{mainthme}), showing how it is suitably constructed for obtaining optimal growth estimates at the free boundary $\partial\{u>0\}$.}
\label{fig1}
\end{figure}

Estimate \eqref{nondeg1} means that for small balls $B_r(x)$ centered at $\partial\{u>0\}$, minimal solutions do not grow slower than $|x-y|^{\alpha}$, $y \in \Omega$. As a result, density estimates and fine geometric-measure properties for the free boundary $\partial\{u>0\}$ are obtained, see Section \ref{geom}.

\smallskip
%%%%%%%%%%%%%%%%%%%%%%%%%%%%%%
The paper is organized as follows: In Section \ref{singprob}, we provide existence of Perron’s solution for the problem \eqref{Pe}. In Section \ref{regsec}, we obtain local oscillation estimates for viscosity solutions of \eqref{Pe}. In Section \ref{nondegsec}, Non-degeneracy estimates are established for minimal solutions $u_\varepsilon$. Section \ref{limitsec} contains the proofs of our main results. In Section \ref{geom}, further analytic and geometric consequences are established. Section \ref{radial} we provide a radial example.

%%%%%%%%%%%%%%%%%%%%%%%%%%%%%
\subsection*{Notations}
Hereafter in this paper, $\Omega$ will be a bounded smooth domain in $\mathbb{R}^n$. $B_r(x) \subset \mathbb{R}^n$ denotes the open $n$-dimensional ball with radius $r>0$ centered at $x \in \mathbb{R}^n$, and $B_r:=B_r(0)$. For a point $x \in \mathbb{R}^n$ and $\mathcal{O} \subset \mathbb{R}^n$, we define $\dist(x,\partial\mathcal{O})$ to be the distance between $x$ and the boundary of $\mathcal{O}$, denoted by $\partial\mathcal{O}$. $\mathcal{L}^n(\mathcal{O})$ denotes the $n$-dimensional Lebesgue measure. For $x \in \Omega$ such that $u(x)<\iota$, we say $x \in \{u<\iota\}$. Similarly, we consider $\{u>\iota\}$, $\{u \leq \iota\}$ and $\{u \geq \iota\}$.

%%%%%%%%%%%%%%%%%%%%%%%%%%%%%%%%%%%%%%%%%%%%%%
\section{Singular perturbation strategy and existence of minimal solutions} \label{singprob}
%%%%%%%%%%%%%%%%%%%%%%%%%%%%%%%%%%%%%%%%%%%%%%

In this section, we introduce the singularly perturbed scheme adopted throughout this paper. For $\alpha$ defined by \eqref{alpha} and $\delta_\alpha$ (to be chosen in Section \ref{nondegsec}), we consider a Lipschitz function $\mathcal{B}(s) : \mathcal{\mathbb{R}} \to \mathcal{\mathbb{R}}$, satisfying
\begin{equation}\label{HB2}
\begin{array}{ccl}
0 \leq \mathcal{B}(s) \leq 1 & \mbox{for} & s \geq 0 \\[0.15cm]
\mathcal{B}(s) = 0 & \mbox{for} & s \leq \delta/2 \\[0.15cm]
\mathcal{B}(s) = 1 & \mbox{for} & s \geq \delta
\end{array}
\end{equation}
for $0<\delta=\delta_\alpha$. For each small parameter $\varepsilon>0$, we consider
$$
\mathcal{B}_{\varepsilon}(s):= \mathcal{B}\left(\frac{s}{\varepsilon^\alpha}\right),
$$
which is a suitable $\varepsilon$-approximation of $\chi_{\{s>0\}}$, such that
$$
\mathcal{B}_{\varepsilon}(s)\, s^{-\gamma} \equiv 0 \quad \mbox{for } s< \delta/2\varepsilon^\alpha.
$$
We highlight the following scaling invariance: if $v$ solves \eqref{Pe}, then for any $\iota>0$, the function
$$
v_\iota(x):=\frac{v(\iota x)}{\iota^\alpha} \quad \mbox{in } B_1
$$
solves
\begin{equation}\label{scaling}
\Delta_\infty v_\iota = \mathcal{B}_{\frac{\varepsilon}{\iota}}(v_\iota)\, v_\iota^{-\gamma} \quad \mbox{in} \; B_{\frac 1\iota}.
\end{equation}

\medskip

We shall use the following result from \cite[Lemma 2.5]{CEG}.

\begin{proposition}[Lipschitz regularity for infinity subharmonic functions]\label{regforsub}
Let $u$ be a viscosity solution of $-\Delta_\infty u \leq 0$ in $\Omega$. Then $u \in W^{1,\infty}_{loc}(\Omega)$. Furthermore,
$$
|Du(x)| \leq \max\limits_{z \in \partial B_r(x)} \frac{u(z)-u(x)}{r} \leq \frac{2\|u\|_{L^{\infty}(\Omega)}}{r},
$$
for each $x \in \Omega$ and $r< \dist(x,\partial\Omega)$.
\end{proposition}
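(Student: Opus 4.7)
The plan is to exploit the classical equivalence, due to Crandall--Evans--Gariepy \cite{CEG}, between infinity subharmonicity in the viscosity sense and the property of \emph{comparison with cones from above}. Concretely, $u$ satisfies $-\Delta_\infty u \leq 0$ in $\Omega$ if and only if, for every open $V \Subset \Omega$, every $x_0 \in \R^n \setminus V$, every $a \in \R$ and $b \geq 0$, the inequality $u(z) \leq a + b|z - x_0|$ on $\partial V$ forces $u(y) \leq a + b|y - x_0|$ on all of $V$. This characterization is the essential technical input and I would invoke it rather than reprove it.

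Granting this, I would fix $x \in \Omega$ and $r < \dist(x, \partial\Omega)$, and define
$$
S(x,r) := \max_{z \in \partial B_r(x)} \frac{u(z) - u(x)}{r}.
$$
Applying the cone comparison property in the punctured ball $V = B_r(x) \setminus \{x\}$, with vertex $x_0 = x$ and cone $C(y) := u(x) + S(x,r)\,|y - x|$ — which by construction dominates $u$ on the full boundary $\partial V = \{x\} \cup \partial B_r(x)$ — produces the pointwise one-sided estimate
$$
u(y) - u(x) \leq S(x,r)\,|y - x| \quad \text{for every } y \in \overline{B_r(x)}.
$$
The trivial bound $u(z) - u(x) \leq 2\|u\|_{L^\infty(\Omega)}$ on $\partial B_r(x)$ then immediately yields $S(x,r) \leq 2\|u\|_{L^\infty(\Omega)}/r$.

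To promote this one-sided cone inequality into genuine local Lipschitz regularity, I would argue symmetrically: for $\Omega' \Subset \Omega$ with $\rho := \dist(\Omega', \partial\Omega) > 0$ and any $x, y \in \Omega'$ with $|x-y| < \rho/2$, the cone estimate applied at $x$ (with radius $\rho/2$) and separately at $y$ (with radius $\rho/2$, noting that $B_{\rho/2}(y) \Subset \Omega$) gives both $u(y) - u(x) \leq (4\|u\|_{L^\infty(\Omega)}/\rho)\,|y - x|$ and the reverse, yielding $u \in W^{1,\infty}_{loc}(\Omega)$. At any point $x$ where $u$ is differentiable — hence at a.e.\ $x$ by Rademacher's theorem — I would plug $y = x + tv$ with $v = Du(x)/|Du(x)|$ (the case $Du(x) = 0$ being trivial) into the cone inequality and let $t \to 0^+$; the left side tends to $|Du(x)|$, proving $|Du(x)| \leq S(x,r)$.

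The main obstacle is the equivalence between infinity subharmonicity in the viscosity sense and the cone comparison property above. Its proof requires testing the viscosity definition against radial cone test functions $\phi(y) = a + b|y - x_0|$ and verifying that $\Delta_\infty \phi \equiv 0$ away from the vertex, combined with a boundary-touching argument that propagates the cone inequality inward. Since this is standard and explicitly furnished in \cite[Lemma 2.5]{CEG}, I would simply cite it rather than reproduce the argument.
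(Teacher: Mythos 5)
Your proposal is correct and faithfully reconstructs the argument behind \cite[Lemma~2.5]{CEG}, which the paper simply cites without reproducing a proof; the only unstated point is that $S(x,r) \geq 0$ (so that the cone slope $b$ is admissible in the comparison-with-cones hypothesis), which follows from the maximum principle for infinity subharmonic functions, itself a consequence of comparison with cones with $b=0$. Since both the paper and your proposal ultimately rest on the comparison-with-cones characterization from \cite{CEG}, the approaches coincide.
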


\smallskip

%%%%%%%%%%%%%%%%%%%%%%%%%%%%%%%%%%%%%%%%%%%%%
\subsection*{Existence of minimal solutions for the \textbf{}problem \eqref{Pe}}
Here, we are interested in solutions of \eqref{P0} which are limits of minimal Perron’s solutions of \eqref{Pe}, as $\varepsilon \to 0$. Note that the lack of monotonicity in \eqref{Pe} on the variable $u$ does not allow us to make use of a direct application of the classical Perron method, and so, we derive existence of viscosity solutions of the problem \eqref{Pe}. We mention the following result.

\begin{theorem}[\cite{ART2}, Theorem 2.1]\label{ARTexistence}
Let $\mathcal{G}: \Omega \times [0,\infty) \to \mathbb{R}$ be a bounded function, uniformly Lipschitz in the interval $[0,\infty)$. Assume $\mathcal{F}: \Omega \times \mathbb{R}^n \times Sym(n) \to \mathbb{R}$ satisfies the monotonicity condition: for any $x \in \Omega$, $\xi \in \mathbb{R}^n$ and $N,M \in Sym(n)$, there holds
$$
\mathcal{F}(x,\xi,N) \leq \mathcal{F}(x,\xi,M) \quad \mbox{whenever } N \leq M.
$$
Assume a priori $C^{0,\alpha}$ estimates for viscosity solutions of $\mathcal{F}(x,Du,D^2 u)=f(x) \in L^\infty(\Omega)$ and that the problem
\begin{equation}\label{perroneq}
\begin{array}{ccl}
\mathcal{F}(x,Du,D^2 u) = \mathcal{G}(x,u) & \mbox{in} & \Omega, \\[0.15cm]
u = \varphi & \mbox{on} & \partial \Omega
\end{array}
\end{equation}
admits subsolution $\underline u$ and supersolution $\overline u$ with $\underline u=\overline u=\varphi \in W^{2,\infty}(\partial \Omega)$, then the function
$$
v(x):= \inf_{w \in \mathcal{S}} w(x)
$$
is a continuous viscosity solution of \eqref{perroneq}, where
$$
\mathcal{S}:=\{\omega \in C{(\overline \Omega)} \, | \, w \mbox{ is a viscosity supersolution to \eqref{perroneq} and } \underline u \leq w \leq \overline u \mbox{ in } \overline\Omega\}.
$$
\end{theorem}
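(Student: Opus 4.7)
The plan is to implement the Perron method adapted to the present setting, where the main difficulty is the absence of monotonicity of the equation with respect to $u$; the Lipschitz hypothesis on $\mathcal{G}$ is precisely what will allow us to close the argument. First, I would observe that $\mathcal{S}$ is nonempty since $\overline{u} \in \mathcal{S}$, so $v$ is well-defined and satisfies $\underline{u} \leq v \leq \overline{u}$. For each $w \in \mathcal{S}$ the source $\mathcal{G}(x,w(x))$ is bounded, and the classical Perron method applied to $\mathcal{F}(x, Du, D^2 u) = \mathcal{G}(x, w(x))$---whose right-hand side is now a fixed function of $x$---produces a viscosity solution $P(w)$ with $\underline{u} \leq P(w) \leq w$. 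The assumed a priori $C^{0,\alpha}$ estimate then delivers a uniform modulus of continuity for the family $\{P(w)\}_{w \in \mathcal{S}}$, which descends to $v$ and guarantees $v \in C(\overline{\Omega})$.

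Second, I would verify that $v$ is a viscosity supersolution. This follows from the standard fact that the infimum of an equicontinuous family of supersolutions is a supersolution in the viscosity sense: given $\phi \in C^2$ touching $v$ from below at $x_0 \in \Omega$, one selects a minimizing sequence $w_k \in \mathcal{S}$ with $w_k(x_0) \to v(x_0)$, locates points $x_k \to x_0$ where $\phi$ (after a vertical shift by $(w_k-\phi)(x_k)$) touches $w_k$ from below, applies the supersolution property of $w_k$, and passes to the limit using continuity of $\mathcal{F}$, $\mathcal{G}$, and $v$ to obtain
$$
\mathcal{F}(x_0, D\phi(x_0), D^2 \phi(x_0)) \geq \mathcal{G}(x_0, v(x_0)).
$$

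The hard part, and the main obstacle, is showing that $v$ is a viscosity subsolution, since a naive Perron-style bumping argument would exploit monotonicity in $u$. Arguing by contradiction, suppose $\phi \in C^2$ touches $v$ from above at $x_0 \in \Omega$ with
$$
\mathcal{F}(x_0, D\phi(x_0), D^2 \phi(x_0)) > \mathcal{G}(x_0, v(x_0)).
$$
By continuity, there exist $r, \mu > 0$ such that on $B_r(x_0)$ the shifted test function $\phi_\eta := \phi - \eta$ satisfies
$$
\mathcal{F}(x, D\phi_\eta, D^2 \phi_\eta) \geq \mathcal{G}(x, \phi_\eta(x)) + \mu
$$
for all sufficiently small $\eta > 0$; here the Lipschitz bound $|\mathcal{G}(x,\phi) - \mathcal{G}(x,\phi-\eta)| \leq L\eta$ is what allows the perturbation of the argument in $\mathcal{G}$ to be absorbed into the strict inequality, and this is the only point at which the Lipschitz hypothesis is essential. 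After an initial perturbation of $\phi$ that ensures strict touching, so that $\phi > v$ on $\partial B_r(x_0)$, the glued function
$$
w^{\ast}(x) := \begin{cases} \min\{v(x),\, \phi_\eta(x)\}, & x \in B_r(x_0), \\ v(x), & x \in \Omega \setminus B_r(x_0), \end{cases}
$$
is a viscosity supersolution of the original problem lying in $\mathcal{S}$, yet $w^{\ast}(x_0) = v(x_0) - \eta < v(x_0)$, contradicting $v = \inf_{w \in \mathcal{S}} w$. The central technical check is that $w^{\ast}$ remains a supersolution across the interface $\partial B_r(x_0)$; this is secured precisely by the strict separation $\phi_\eta > v$ on that sphere.
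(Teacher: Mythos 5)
The paper imports this theorem from \cite{ART2} as a citation and contains no proof of it, so there is no internal proof to compare against; I therefore assess your argument on its own merits. The broad strategy---a Perron construction combined with a downward bump, using the Lipschitz bound on $\mathcal{G}(x,\cdot)$ to keep the translate $\phi_\eta$ a strict supersolution despite the absence of monotonicity in $u$---is exactly the right one, and your subsolution step captures the key mechanism. However, the continuity of $v$ is not correctly established, and this gap propagates backward into the other two steps.

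Concretely: your $P(w)$ solves the frozen equation $\mathcal{F}(x,Du,D^2u) = \mathcal{G}(x,w(x))$, not $\mathcal{F} = \mathcal{G}(x,P(w)(x))$, so there is no reason $P(w)$ is a supersolution of \eqref{perroneq}, hence no reason $P(w) \in \mathcal{S}$; consequently the identity $\inf_{\mathcal{S}} w = \inf_{\mathcal{S}} P(w)$, on which the claim that ``the modulus descends to $v$'' implicitly rests, is unavailable. (Moreover, for the frozen Perron construction to accept $\underline u$ as a subsolution you would need $\mathcal{G}(x,\underline u) \geq \mathcal{G}(x,w)$, which is precisely the monotonicity you are trying to do without; this is fixable by replacing $\underline u$ with a subsolution of $\mathcal{F} = \sup\mathcal{G}$, but it is not addressed.) As written, $v$ is only an infimum of continuous functions and hence merely upper semicontinuous. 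This matters twice. In the supersolution step you explicitly invoke ``continuity of $v$'' to force $x_k \to x_0$; for USC $v$ the minimizing-sequence touching argument only shows that the lower semicontinuous envelope $v_*$ is a supersolution, since a subsequence $x_k \to \bar x \neq x_0$ cannot be excluded when $v$ can jump down. In the subsolution step, membership of $w^* = \min(v,\phi_\eta)$ in $\mathcal{S}$ requires $w^* \in C(\overline\Omega)$, hence again $v \in C(\overline\Omega)$. Standard repairs exist---for instance, glue $\phi_\eta$ with a member $w \in \mathcal{S}$ that nearly attains $v(x_0)$ rather than with $v$ itself, so that $w^*$ is automatically continuous; or prove sub- and supersolution properties for $v^*$ and $v_*$ respectively and only afterward invoke the a priori Hölder estimate, once $v$ is known to satisfy $\mathcal{F}(x,Du,D^2u) = f(x)$ with $f = \mathcal{G}(x,v(x)) \in L^\infty$---but neither is carried out in your proposal.
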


Note that the infinity Laplacian operator is given by $\mathcal{F}(\xi,M)=\langle M \xi,\xi \rangle$, which is monotone. Also, by construction, for each $\varepsilon>0$ the function $\mathcal{G}(s)= \mathcal{B}_\varepsilon(s)s^{-\gamma}$ is bounded and uniformly Lipschitz in $[0,\infty)$. We recall that viscosity solutions of $\Delta_\infty u \in L^\infty$ are locally Lipschitz and that functions $\overline u$ and $\underline u$ satisfying
\begin{equation}\label{subsuper}
\left\{
\begin{array}{cll}
\Delta_\infty \underline u = \varepsilon^{-\alpha\gamma} & \mbox{in } \Omega \\
\underline u = \varphi & \mbox{in } \partial\Omega
\end{array}
\right.
\quad \mbox{and} \quad
\left\{
\begin{array}{cll}
\Delta_\infty \overline u = 0 & \mbox{in } \Omega \\
\overline u = \varphi & \mbox{in } \partial\Omega,
\end{array}
\right.
\end{equation}
are subsolution and supersolution for \eqref{Pe} respectively. Therefore, as an immediate consequence of Theorem \ref{ARTexistence} and Proposition \ref{regforsub}, we have the following result.

\begin{theorem}[Existence of minimal solutions]\label{Peexistence}
Let $\Omega\subset \R^n$ be a smooth domain and $\varphi$ be a $W^{2,\infty}(\partial\Omega)$ nonnegative boundary datum. Then, for each $\varepsilon>0$, the problem \eqref{Pe} has a minimal viscosity solution $u_\varepsilon\in C(\overline{\Omega})$. Moreover, $\{u_\varepsilon\}_{\varepsilon>0}$ is locally Lipschitz-equicontinuous, and satisfies $0 \leq u_\varepsilon \leq \|\varphi\|_{L^{\infty}}$.
\end{theorem}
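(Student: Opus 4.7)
The plan is to apply Theorem \ref{ARTexistence} directly with $\mathcal{F}(\xi,M) := \langle M \xi, \xi \rangle$ and the reaction $\mathcal{G}(s) := \mathcal{B}_\varepsilon(s)\,s^{-\gamma}$. The matrix monotonicity of $\mathcal{F}$ is immediate. By the cutoff \eqref{HB2} the function $\mathcal{G}$ vanishes for $s\leq \varepsilon^\alpha \delta/2$ and is smooth for $s>\varepsilon^\alpha \delta/2$, so (for each fixed $\varepsilon>0$) it is bounded on $[0,\infty)$ and uniformly Lipschitz there, with norm depending only on $\varepsilon,\delta,\gamma$. The a priori $C^{0,\alpha}$ estimate required by Theorem \ref{ARTexistence} is supplied by Proposition \ref{regforsub}, since any viscosity solution of $\Delta_\infty u = f \in L^\infty$ is in particular infinity subharmonic and hence locally Lipschitz.

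Next I would exhibit an ordered pair of sub and supersolutions with the prescribed boundary value. Take $\overline{u}$ to solve $\Delta_\infty \overline{u}=0$ in $\Omega$ with $\overline{u}=\varphi_\varepsilon$ on $\partial\Omega$, whose existence and continuity up to $\partial\Omega$ follow from the classical Dirichlet problem for the infinity Laplacian; since the right-hand side of \eqref{Pe} is nonnegative, $\overline{u}$ is indeed a supersolution. For the subsolution, let $\underline{u}$ solve $\Delta_\infty \underline{u}=C\varepsilon^{-\alpha \gamma}$ in $\Omega$ with $\underline{u}=\varphi_\varepsilon$ on $\partial\Omega$, where $C>0$ is chosen so that $C\varepsilon^{-\alpha \gamma}$ dominates $\mathcal{G}$ pointwise; this makes $\underline{u}$ a subsolution, and the comparison principle for $\Delta_\infty u=f$ with $L^\infty$ datum yields $\underline{u}\leq \overline{u}$ in $\Omega$. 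Theorem \ref{ARTexistence} then produces a continuous viscosity solution $u_\varepsilon$ realized as the infimum of admissible supersolutions, which is minimal by construction. Nonnegativity of $u_\varepsilon$ follows from $\varphi_\varepsilon\geq 0$ via the maximum principle, since one may restrict the admissible Perron class to nonnegative supersolutions without changing the infimum.

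For the uniform local Lipschitz bound, the key point is that $\mathcal{B}_\varepsilon(u_\varepsilon)u_\varepsilon^{-\gamma}\geq 0$, so each $u_\varepsilon$ satisfies $-\Delta_\infty u_\varepsilon\leq 0$ in $\Omega$ in the viscosity sense. Proposition \ref{regforsub} immediately gives
$$
|Du_\varepsilon(x)| \leq \frac{2\|u_\varepsilon\|_{L^\infty(\Omega)}}{\dist(x,\partial\Omega)}, \qquad x \in \Omega,
$$
and the uniform $L^\infty$ bound follows from $0\leq u_\varepsilon\leq \overline{u}$ together with $\|\overline{u}\|_{L^\infty(\Omega)}\leq \|\varphi_\varepsilon\|_{L^\infty(\partial\Omega)}$ by the maximum principle for infinity harmonic functions, the latter remaining uniform in $\varepsilon$ because $\varphi_\varepsilon\to\varphi$. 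Consequently, on any $\Omega'\Subset\Omega$ the gradient bound is controlled independently of $\varepsilon$, delivering the desired equicontinuity.

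The step I would check most carefully is the matching of hypotheses in Theorem \ref{ARTexistence}: confirming that $\underline{u}=\overline{u}=\varphi_\varepsilon$ continuously up to $\partial\Omega$ (which requires $\varphi_\varepsilon\in W^{2,\infty}(\partial\Omega)$) and that the ordering $\underline{u}\leq\overline{u}$ really follows from comparison. Both are standard for the Dirichlet problem for $\Delta_\infty$ with bounded source, but they must be invoked explicitly; no new technical ideas are needed beyond what is already contained in \cite{ART2} and the cited existence/comparison theory for the infinity Laplacian.
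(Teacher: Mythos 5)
Your main argument mirrors the paper's almost exactly: invoke Theorem~\ref{ARTexistence} with $\mathcal{F}(\xi,M)=\langle M\xi,\xi\rangle$ and $\mathcal{G}(s)=\mathcal{B}_\varepsilon(s)s^{-\gamma}$, use Proposition~\ref{regforsub} as the a priori $C^{0,\alpha}$ ingredient, and take $\overline u$ infinity harmonic and $\underline u$ solving $\Delta_\infty\underline u=C\varepsilon^{-\alpha\gamma}$ with boundary data $\varphi_\varepsilon$ (the paper writes $\varepsilon^{-\alpha\gamma}$ with the constant absorbed). The local Lipschitz equicontinuity is obtained exactly as in the paper, from the fact that each $u_\varepsilon$ is infinity subharmonic together with a uniform $L^\infty$ bound.

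The one step that is not correct as written is the nonnegativity. You claim ``one may restrict the admissible Perron class to nonnegative supersolutions without changing the infimum,'' but this is not justified: restricting the class $\mathcal{S}$ can only \emph{raise} the infimum, and there is no reason why the truncations $\max\{w,0\}$ of arbitrary $w\in\mathcal{S}$ should again lie in $\mathcal{S}$ (for supersolutions, only the \emph{minimum} of two supersolutions is again a supersolution). Moreover, one cannot invoke a global comparison principle here because $s\mapsto\mathcal{B}_\varepsilon(s)s^{-\gamma}$ is not monotone in $s$, which is precisely why the paper appeals to the Perron-type result of \cite{ART2} rather than classical comparison. A correct route is either (i) the paper's argument in its Remark: if $\mathcal{O}^-:=\{u_\varepsilon<0\}\neq\emptyset$, then on $\mathcal{O}^-$ the right-hand side vanishes by construction of $\mathcal{B}_\varepsilon$, so $u_\varepsilon$ is infinity harmonic there with zero boundary data, and the classical comparison principle for $\Delta_\infty$ forces $u_\varepsilon\equiv 0$ on $\mathcal{O}^-$, a contradiction; or (ii) observe that $0$ is itself a viscosity subsolution of \eqref{Pe} (since $\mathcal{G}(0)=0$), hence $\max\{\underline u,0\}$ is a subsolution agreeing with $\varphi_\varepsilon$ on $\partial\Omega$, and replacing $\underline u$ by this truncation forces every $w\in\mathcal{S}$, and hence $u_\varepsilon$, to be nonnegative.
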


\begin{remark}\label{rem1}
As stated above, we assure that $u_\varepsilon$ is nonnegative and globally bounded. Indeed, if $\mathcal{O}^-(u):=\{x \in \overline{\Omega} \, | \, u(x)<0\}$ is nonempty. Since $\varphi \geq 0$, we would have $\mathcal{O}^-(u) \Subset Int(\Omega)$ and so, $u_\varepsilon$ would be an infinity-harmonic function in $\mathcal{O}^-(u)$ satisfying $u_\varepsilon = 0$ on $\partial \mathcal{O}^-(u)$. By the classical comparison principle \cite{J,LW}, $u_\varepsilon = 0$ in $\mathcal{O}^-(u)$, which is a contradiction. Finally, using comparison principle and \eqref{subsuper}, we conclude that $u_\varepsilon \leq \|\varphi\|_{L^{\infty}}$.
\end{remark}

\begin{remark}\label{rem}
We finish this section by verifying that, for each $u_\varepsilon \geq 0$, either $u_\varepsilon >0$ or $u_\varepsilon \equiv 0$ in $\Omega$. In fact, if we assume that $u_\varepsilon(x_0)=0$ for some $x_0 \in \Omega$, then from \eqref{Pe}, $u_\varepsilon$ is infinity harmonic in $\mathcal{O}_\varepsilon :=\{ x \in \Omega \, | \, u_\varepsilon < \frac{\delta}{2} \varepsilon^\alpha\}$. Therefore, by the strong maximum principle, we get $u_\varepsilon \equiv 0$ in $\mathcal{O}_\varepsilon$, and so by continuity, $u_\varepsilon$ cannot attain any positive value in $\Omega$.
\end{remark}

%%%%%%%%%%%%%%%%%%%%%%%%%%%%%%%%%%%%%%%%%%%%%%
\section{Optimal oscillation estimates at floating level sets}\label{regsec}
%%%%%%%%%%%%%%%%%%%%%%%%%%%%%%%%%%%%%%%%%%%%%%

In this section, we consider positive viscosity solutions of the perturbed singular equation
\begin{equation}\label{Ee}\tag{$E_\varepsilon$}
\Delta_\infty u = \mathcal{B}_\varepsilon(u)\,u^{-\gamma} \quad \mbox{in } \; \Omega,
\end{equation}
for $B_\varepsilon$ as in Section \ref{singprob}. We establish optimal growth estimates  
$$
\sup\limits_{B_\kappa(x)} u_\varepsilon \lesssim \kappa^{\alpha} \quad \mbox{for} \quad
x \in \{0<u_\varepsilon \lesssim \kappa^\alpha\},
$$
see Theorem \ref{mainthme}. Our analysis follows ideas in \cite{AT}, which is based on regularity properties of the following auxiliary function
$$
v(x) := u^{\frac{1}{\alpha}}(x) \quad \mbox{for }\; x \in \Omega,
$$
for $\alpha$ as in \eqref{alpha}. We compute
$$
Dv= \frac{1}{\alpha} u^{\frac{1}{\alpha}-1}Du,
$$
$$
D^2v= \frac{1}{\alpha} \left(\frac{1}{\alpha}-1 \right) u^{\frac{1}{\alpha}-2}Du \otimes Du + \frac{1}{\alpha} u^{\frac{1}{\alpha}-1}D^2u.
$$
By using equation \eqref{Ee}, one has
$$
\Delta_\infty v = \frac{1}{\alpha^3}\left(\frac{1}{\alpha}-1 \right)u^{\frac{3}{\alpha}-4}|Du|^4+\frac{1}{\alpha^3}u^{\frac{3}{\alpha}-3}B_\varepsilon(u)u^{-\gamma}.
$$
Therefore, by writing the equation above only in terms of $v$, we get
\begin{equation}\label{auxeq}
\Delta_\infty v = \left( (1-\alpha)|Dv|^4 + \frac{1}{\alpha^3}f \right)v^{-1},
\end{equation}
where $f(x):=B_\varepsilon(v^{\alpha}(x))$, which is bounded and nonnegative. From Remark \ref{rem}, we note that $v>0$ in $\Omega$, which implies that, for each $\varepsilon>0$, the equation above is derived everywhere in $\Omega$, using the language of viscosity solutions.

\medskip

We now derive asymptotic growth estimates.

%%%%%%%%%%%%%%%%%%%%
\begin{theorem}[Asymptotic growth estimates]\label{asympthm} Given $\Omega'\Subset \Omega$ and $\mu \in (0,1)$, there exist constants $C$ and $\kappa_0$ depending on $\mu$, $\gamma$, $\|u\|_{L^\infty(\Omega)}$, $\dist(\Omega',\partial \Omega)$ and dimension, but independent of $\varepsilon$, such that if $u$ is a positive viscosity solution of \eqref{Ee}, then
\begin{equation}\label{asympest}
\sup_{B_\kappa(x)} u \leq \left(C \kappa^\mu+ u(x)^{\frac{1}{\alpha}} \right)^\alpha
\end{equation}
for any $x \in \Omega'$ and $0< \kappa \leq \kappa_0$.
\end{theorem}

\begin{proof}
Here, we use techniques introduced in \cite{IL90}. Since the estimates are local, we may assume $\Omega=B_1$ and $\Omega'=B_{1/2}$. It is enough to show that $v=u^{\frac{1}{\alpha}}$ is locally $C^{0,\mu}$ for any $0<\mu<1$. For a given positive viscosity solution $v$ of \eqref{auxeq}, we claim that
\begin{equation}\label{IL}
\Theta(x,y):=v(x)-v(y)-L \omega(|x-y|)-\varrho(|x|^2+|y|^2) \leq 0,
\end{equation}
for universal large parameters $L,\varrho$ and $\omega(t)=t_+^\mu$. We now assume the term $f$ in \eqref{auxeq} is a positive bounded function in $\Omega$, and so, the constants $L$ and $\varrho$ shall be obtained depending only on $\mu$, $\gamma$, $\|u\|_{L^\infty(B_1)}$, $\sup_{B_1}f$ and dimension.
We argue by contradiction that the claim fails. If $(x_0,y_0)$ is a maximum point of $\Theta$ in $\overline{B_{1/2}} \times \overline{B_{1/2}}$, we assume
\begin{equation}\label{contradiction}
\Theta(x_0,y_0) > 0.
\end{equation}
Taking $\varrho:=8\|v\|_\infty$, we assure that $(x_0,y_0)$ is an interior maximum point, and $x_0 \neq y_0$. As an adaptation of Jensen-Ishii’s approximation lemma \cite{CIL}, we use \cite[Lemma 1]{ATU3} to guarantee the existence of limiting sub-jet and super-jet
$$
(\xi_x,M_x) \in \overline{J}^{2,+}_{B_{1/2}}u(x_0) \quad \mbox{and} \quad (\xi_y,M_x) \in \overline{J}^{2,-}_{B_{1/2}}u(y_0)
$$
such that
\begin{equation}\label{righthandside1}
\langle M_x\xi_x,\xi_x \rangle - \langle M_y\xi_y,\xi_y \rangle
\leq 4L \omega''(\rho) \left(L \omega'(\rho)+ \varrho \rho \right)^2 + 16 \varrho \left(L^2 \omega'(\rho)^2 + \varrho^2\right)
\end{equation}
for $\rho:=|x_0-y_0|$.
Next, using \eqref{auxeq}, we obtain
\begin{equation}\label{lefthandside2}
\begin{array}{rcr}
\mathcal{A}:=\langle M_x\xi_x,\xi_x \rangle - \langle M_y\xi_y,\xi_y \rangle & \geq & \displaystyle\left( (1-\alpha)|\xi_x|^4 + \frac{1}{\alpha^3}f(x_0) \right)v(x_0)^{-1}\, \\[0.35cm]
& & \displaystyle - \left( (1-\alpha)|\xi_y|^4 + \frac{1}{\alpha^3}f(y_0) \right)v(y_0)^{-1}.
\end{array}
\end{equation}
In addition, we easily notice that
$$
L \mu |x_0-y_0|^{\mu-1}-2\varrho|y_0| \geq L\mu-2\varrho \geq 0,
$$
where the last inequality is obtained for $L \gg 1$ universal. From this, we get
$$
|\xi_y|^4 \geq \left( L \mu |x_0-y_0|^{\mu-1}-2\varrho|y_0| \right)^4 \geq \left( L\mu - 2 \varrho \right)^4.
$$
Therefore,
\begin{equation}\label{lefthandside1}
(1-\alpha)|\xi_y|^4 + \frac{1}{\alpha^3}f(y_0) \leq (1-\alpha)\left( L\mu - 2 \varrho \right)^4 + \frac{1}{\alpha^3}\|f\|_{\infty} <0,
\end{equation}
provided $L$ is sufficiently large. By \eqref{contradiction},
$$
v(y_0)^{-1}>v(x_0)^{-1}
$$
and so, this together \eqref{lefthandside2} and \eqref{lefthandside1} provides
\begin{equation}\nonumber
\mathcal{A} \geq \displaystyle\left( (1-\alpha)|\xi_x|^4 - (1-\alpha)|\xi_y|^4 - \frac{1}{\alpha^3}\|f\|_\infty \right)v(x_0)^{-1},
\end{equation}
which gives
\begin{equation}\label{lefthandside3}
\mathcal{A}\, v(x_0) \geq (1-\alpha)(L\mu \rho ^{\mu-1} + 2 \varrho)^4 - (1-\alpha)(L\mu \rho^{\mu-1} - 2 \varrho)^4 - \frac{1}{\alpha^3}\|f\|_\infty .
\end{equation}
\medskip
On the other hand, from \eqref{righthandside1} we derive
\begin{equation}\label{righthandside2}
\begin{array}{rcl}
\mathcal{A} & \leq & 4\mu^3(\mu-1)L^3 \rho^{3\mu-4} +16\mu^2 \varrho L^2 \rho^{2\mu -2} + 16 \varrho^3 \\[0.3cm]
& \leq & \rho^{3\mu-4} \left( 4\mu^3(\mu-1)L^3 +16\mu^2 \varrho L^2 \rho^{2-\mu} + 16 \varrho^3 \rho^{4-3\mu} \right)\\[0.3cm]
& \leq & \rho^{3\mu-4} \left( 4\mu^3(\mu-1)L^3 +16\mu^2 \varrho L^2 + 16 \varrho^3 \right),
\end{array}
\end{equation}
where the last term turns strictly negative for $L$ chosen universally large.

\smallskip

Finally, from \eqref{contradiction}, we notice that
$$
v(x_0) \geq L \rho^\mu.
$$
From \eqref{lefthandside3}, \eqref{righthandside2} and the inequality above, we get
\begin{equation}\nonumber
\begin{array}{c}
(1-\alpha)(L\mu \rho ^{\mu-1} + 2 \varrho)^4 - (1-\alpha)(L\mu \rho^{\mu-1} - 2 \varrho)^4 - \dfrac{1}{\alpha^3}\|f\|_\infty \\[0.35cm]
\leq\; \rho^{4\mu-4} \left( 4\mu^3(\mu-1)L^4 +16\mu^2 \varrho L^3 + 16 \varrho^3L \right),
\end{array}
\end{equation}
which gives
\begin{equation}\nonumber
\begin{array}{c}
(1-\alpha)\left(\mu + \dfrac{2 \varrho}{L}\rho ^{1-\mu} \right)^4 - (1-\alpha)\left(\mu - \dfrac{2 \varrho}{L}\rho ^{1-\mu} \right)^4 - \dfrac{1}{\alpha^3L^4}\|f\|_\infty \rho ^{4-4\mu} \\[0.4cm]
\leq \; 4\mu^3(\mu-1) +16\dfrac{\mu^2 \varrho}{L}+ 16 \dfrac{\varrho^3}{L^3}.
\end{array}
\end{equation}
Therefore, we conclude that
\begin{equation}\nonumber
\begin{array}{c}
(1-\alpha)\left(\mu + \dfrac{2 \varrho}{L} \right)^4 - (1-\alpha)\left(\mu - \dfrac{2 \varrho}{L}\right)^4 - \dfrac{1}{\alpha^3L^4}\|f\|_\infty \\[0.4cm]
\leq \; 4\mu^3(\mu-1) +16\dfrac{\mu^2 \varrho}{L} + 16 \dfrac{\varrho^3}{L^3}.
\end{array}
\end{equation}
We get a contradiction by choosing $L$ larger than the previous choices.
\end{proof}

Next, we follow discrete iterative arguments and continuous methods as in \cite[theorem 3]{AT} to prove optimal growth estimates for positive solutions of \eqref{Ee}, independent of $\varepsilon$, see also \cite{CKS}.

%%%%%%%%%%%%%%%%%%%%%%%%%%%%%%%%%%%%%%%%%%%%%%
\begin{theorem}[Asymptotic estimates imply optimality]\label{mainthme} Given $\Omega' \Subset \Omega$, there exist constants $C$ and $\kappa_\star$ depending on $\gamma$, $\|u\|_{L^\infty(\Omega)}$, $\dist(\Omega',\partial \Omega)$ and dimension, but independent of $\varepsilon$, such that if $u$ is a positive viscosity solution of \eqref{Ee}, then
\begin{equation}\label{growthest}
\sup_{B_\kappa(x)} u \leq C\left( \kappa^\alpha+ u(x) \right)
\end{equation}
for any $x \in \Omega'$ and $0< \kappa \leq \kappa_\star$.
\end{theorem}

\begin{proof}
With no loss of generality, we assume $\Omega=B_1$ and $\Omega'=B_{1/2}$. By contradiction, for each integer $k>1$, there exist $\varepsilon_k>0$ and viscosity solution $u_k$ of \eqref{Ee} for $\varepsilon=\varepsilon_k$, such that
\begin{equation}\label{contradiction2}
s_k:= \sup\limits_{B_{r_k}(x_k)}u_k \geq k(r_k^\alpha+u_k(x_k)),
\end{equation}
for some radii $r_k=o(1)$ and $x_k \in B_{1/2}$.

\smallskip
Define
$$
\varphi_k(x)=\frac{u_k(x_k+r_k x)}{s_k} \quad \mbox{in } \; B_1.
$$
Note that from \eqref{contradiction2} we obtain
\begin{equation}\label{function}
\sup\limits_{B_1}\varphi_k = 1 \quad \mbox{and} \quad \varphi_k(0) + \frac{r_k^\alpha}{s_k}\leq \frac{1}{k}.
\end{equation}
In addition, for each $k>0$, $\varphi_k$ solves
\begin{equation}\label{eqk}
\Delta_\infty \varphi_k = \mathcal{B}_{\epsilon_k}(\varphi_k) \varphi_k^{-\gamma} \quad \mbox{in } \; B_1,
\end{equation}
where
\begin{equation}\label{rhsk}
\mathcal{B}_{\epsilon_k}(s) := \left(\frac{r_k^\alpha}{s_k}\right)^{\frac{1}{3+\gamma}}B_{\epsilon_k}(s) \leq \left(\frac{r_k^\alpha}{s_k}\right)^{\frac{1}{3+\gamma}},
\end{equation}
for $\epsilon_k:=\varepsilon_k/r_k^{\frac{1}{\alpha}}$. Using the estimate above and \eqref{function}, we apply Theorem \ref{asympthm} and conclude that, for each $0<\mu<1$ fixed, $\varphi_k$ satisfies estimate \eqref{asympest}. Also, from \eqref{function}, Proposition \ref{regforsub} provides that $\{\varphi_k\}_k$ is equicontinuous in the $C^{0,1}(B_1)$-topology, and so, up to a subsequence, $\varphi_k$ converges to a function $\varphi_0$. By \eqref{eqk}, \eqref{rhsk} and stability of viscosity solutions, we conclude that $\varphi_0$ solves
$$
\varphi_0^\gamma\Delta_\infty \varphi_0 = 0 \quad \mbox{in } \; B_1,
$$
satisfies
$$
\varphi_0 \geq 0 \; \mbox{ in } B_1, \quad \sup\limits_{B_1}\varphi_0 =1, \quad \varphi_0(0)=0
$$
and, for each $0< \mu < 1$, there holds
\begin{equation}\label{asymplim}
\sup_{B_r(x)} \varphi_0 \leq \left(C r^\mu+ \varphi_0(x)^{\frac{1}{\alpha}} \right)^\alpha,
\end{equation}
for any $x \in \Omega'$ and $0< r \leq \kappa_0$.
\smallskip
Since $\partial\{\varphi_0>0\} \cap B_1 \neq \emptyset$ and $\{\varphi_0 >0\} \cap B_1 \neq \emptyset$, we can select a point $z_0 \in \{\varphi_0 =0\} \cap B_1$ and $z_+ \in \{\varphi_0>0\} \cap B_1$, satisfying
$$
d:=\dist(z_+, \{\varphi_0=0\})=|z_+-z_0|.
$$
Note that $\varphi_0$ is infinity-harmonic in $B_{d}(z_+)$. By the Hopf maximum principle for degenerate elliptic equations, see \cite{BdL}, we obtain
$$
0< \liminf\limits_{s \to 0^+} \frac{\varphi_0(z_0+s(z_+-z_0))-\varphi_0(z_0)}{s}.
$$
On the other hand, by \eqref{asymplim} and choosing $1/\alpha < \mu < 1$, we get
\begin{equation}\nonumber
\dfrac{\varphi_0(s(z_+-z_0)+z_0)}{s} = \dfrac{\varphi_0(s(z_+-z_0)+z_0)}{s^{\mu\alpha}} \cdot s^{\mu\alpha-1} \leq C s^{\mu\alpha-1}
\to 0,
\end{equation}
as $s \to 0^+$. This completes the proof of Theorem \ref{mainthme}.
\end{proof}

%%%%%%%%%%%%%%%%%%%%%%%%%%%%%%%%%%%%%%%%%%%%%%%%%%%
\section{Nondegeneracy properties for minimal solutions}\label{nondegsec}
%%%%%%%%%%%%%%%%%%%%%%%%%%%%%%%%%%%%%%%%%%%%%%%%%%%

Here, we prove nondegeneracy estimates for minimal solutions. More precisely, we show that the maximum a solution $u_\varepsilon$ in $B_r(x) \Subset \Omega$ does not grow slower than $r^{\alpha}$.

%%%%%%%%%%%%%%%
\begin{theorem}[Strong nondegeneracy]\label{nondegthm}
Let $u_\varepsilon$ be the minimal solution of \eqref{Pe}. There exists $c>0$ depending only on $\gamma$, such that for each $x\in\{u_\varepsilon \geq c \,\varepsilon^\alpha\}$ one has
\begin{equation}\label{nondegest}
\sup\limits_{B_r(x)}u_\varepsilon \geq c\, r^{\alpha},
\end{equation}
for $\varepsilon \leq r \leq \tfrac{1}{2}\dist(x,\partial\Omega)$.
\end{theorem}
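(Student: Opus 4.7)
I plan to argue by contradiction, leveraging the fact that $u_\varepsilon$ is the \emph{minimal} element of the Perron class $\mathcal{S}$ from Theorem~\ref{ARTexistence}: any admissible $\widetilde w\in\mathcal{S}$ satisfying $\widetilde w(x_0)<u_\varepsilon(x_0)$ would contradict $u_\varepsilon=\inf_{w\in\mathcal{S}}w$. Suppose then that for some $x_0\in\{u_\varepsilon\ge\delta\varepsilon^\alpha\}$ and some $r\in[\varepsilon,\tfrac12\dist(x_0,\partial\Omega)]$,
\[
\sup_{B_r(x_0)} u_\varepsilon<\delta r^\alpha.
\]
The plan is to build a radial super-solution $W$ of \eqref{Pe} on $B_r(x_0)$ with $W\ge u_\varepsilon$ on $\partial B_r(x_0)$ and $W(x_0)<\delta\varepsilon^\alpha$, and to set
\[
\widetilde w(y):=\begin{cases}\min\{u_\varepsilon(y),W(y)\} & y\in B_r(x_0),\\ u_\varepsilon(y) & y\in\Omega\setminus B_r(x_0).\end{cases}
\]
Continuity across $\partial B_r(x_0)$ is ensured by the boundary comparison, and since the minimum of two viscosity super-solutions of $\Delta_\infty u=f(u)$ is again a super-solution (tested from below, the minimum selects the smaller summand at any touching point), $\widetilde w$ is a viscosity super-solution of \eqref{Pe} on $\Omega$ belonging to $\mathcal{S}$. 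Minimality then forces $u_\varepsilon(x_0)\le\widetilde w(x_0)\le W(x_0)<\delta\varepsilon^\alpha$, contradicting $u_\varepsilon(x_0)\ge\delta\varepsilon^\alpha$.

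The construction of $W$ rests on the algebraic identity
\[
\Delta_\infty\bigl(c\,|y-x_0|^\alpha\bigr)=c^{3+\gamma}\alpha^3(\alpha-1)\bigl(c|y-x_0|^\alpha\bigr)^{-\gamma},\qquad y\ne x_0,
\]
a direct computation using $\alpha=4/(3+\gamma)$ and the fact that $\gamma<1$ gives $\alpha>1$. Setting $c_*:=[\alpha^3(\alpha-1)]^{-1/(3+\gamma)}>0$, the profile $\phi_\star(y):=c_*|y-x_0|^\alpha$ is an exact radial solution of $\Delta_\infty\phi_\star=\phi_\star^{-\gamma}$. I would define $W$ to coincide with $\phi_\star$ on the outer annulus $\{|y-x_0|\ge r_*\}$, where $r_*:=(\delta/c_*)^{1/\alpha}\varepsilon$ is chosen so that $W\ge\delta\varepsilon^\alpha$ (hence $\mathcal{B}_\varepsilon(W)=1$ and $W$ is a classical solution of \eqref{Pe} there), and on the inner ball $\{|y-x_0|<r_*\}$ to be given by a radial piece that is infinity-superharmonic and takes values strictly below $\delta\varepsilon^\alpha$; on that piece $\mathcal{B}_\varepsilon(W)$ vanishes (or is small), so the inequality $\Delta_\infty W\le\mathcal{B}_\varepsilon(W)W^{-\gamma}$ becomes $\Delta_\infty W\le 0$, trivially met by the concave interpolation. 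Taking $\delta$ universally small with $\delta\le c_*$ yields the boundary dominance $W|_{\partial B_r(x_0)}=c_* r^\alpha\ge\delta r^\alpha>\sup_{\partial B_r(x_0)}u_\varepsilon$.

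Main obstacle: The subtle step is to engineer the inner radial interpolation so that $W$ is a genuine viscosity super-solution across the singular cut-off transition $\{0<\mathcal{B}_\varepsilon(W)<1\}$ and at the matching sphere $\{|y-x_0|=r_*\}$, where convex corners and potentially large tangential second derivatives of test functions can break the naive super-solution test for the infinity Laplacian. One remedies this by an explicit ODE construction matching $W$ to $\phi_\star$ in a $C^1$ (ideally smoother) fashion at $r_*$ and by invoking the standard corner analysis for $\Delta_\infty$ at the innermost matching point. A subsidiary bookkeeping task is the coordination of the universal constants $\delta$ and $c_*$ and the verification $\underline u\le\widetilde w\le\overline u$ so that $\widetilde w$ is admissible in the Perron family $\mathcal{S}$.
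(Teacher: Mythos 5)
Your overall strategy (build a radial supersolution, truncate via a minimum, contradict Perron minimality) is indeed the one the paper uses in the proof of Theorem~\ref{nondegthm}, and your algebraic identity $\Delta_\infty(c\,|y|^\alpha)=c^{3+\gamma}\alpha^3(\alpha-1)(c|y|^\alpha)^{-\gamma}$ is correct. The gap is in the inner interpolation of $W$, and it is not merely bookkeeping. Writing $g(r)$, $r=|y-x_0|$, for the inner radial piece, you require $g$ to match $\phi_\star$ in $C^1$ at $r_*$, to be infinity superharmonic (so $g''\le 0$ wherever $g'\ne 0$), and to satisfy $g(0)<\delta\varepsilon^\alpha$. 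Since $g'(r_*)=\phi_\star'(r_*)=\alpha c_*r_*^{\alpha-1}>0$, concavity forces $g'(r)\ge g'(r_*)$ for $0<r<r_*$, hence
$$
g(0)\le g(r_*)-r_*\,g'(r_*)=c_*r_*^\alpha(1-\alpha)<0 ,
$$
so $W$ necessarily dips strictly below zero; worse, a radially symmetric $C^1$ profile has $g'(0)=0$, which together with $g''\le 0$ would prevent $g'$ from ever becoming positive — so there is in fact \emph{no} admissible concave $C^1$ interpolant at all. If you instead keep a kink at $r_*$ (extend $\phi_\star$ inward by its constant value $\delta\varepsilon^\alpha$), that kink is convex in the radial direction, and a test function touching from below with radial gradient strictly between the two one-sided slopes has \emph{unconstrained} radial second derivative; the viscosity supersolution inequality for $\Delta_\infty$ can then fail at the matching sphere. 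Either way, the construction as described breaks, and in the first variant the negativity of $W$ also jeopardizes $\underline u\le\widetilde w$, i.e.\ membership in $\mathcal{S}$.

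The paper's Proposition~\ref{barrierprop1} resolves exactly this: instead of the exact solution $\phi_\star$ plus an inner cap below $\delta\varepsilon^\alpha$, it places a flat plateau $\Phi_\eta\equiv\delta$ on $B_{\sigma\eta}$ (at the level where $\mathcal{B}(\delta)=1$, so $\Delta_\infty\Phi_\eta=0<\delta^{-\gamma}$ holds for free), joins it $C^{1,1}$ to the outer tail $2\delta|x|^\alpha+D$ through a quadratic piece $A(|x|-\sigma\eta)^2+\delta$ whose zero slope at $|x|=\sigma\eta$ makes the inner matching trivial, and tunes $A,D,\sigma,\delta$ so the supersolution inequality holds on each piece. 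This keeps $\Phi_\eta\ge\delta$ and nonnegative everywhere. The price is $\Phi_\varepsilon(x_0)=\delta\varepsilon^\alpha$, an equality rather than a strict inequality, so the contradiction is first derived on the open set $\{u_\varepsilon>\delta\varepsilon^\alpha\}$ and then transferred to the closed set $\{u_\varepsilon\ge\delta\varepsilon^\alpha\}$ by continuity. Replacing your inner interpolation with the plateau-plus-quadratic barrier and adding that continuity step closes the gap.
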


The crucial step in obtaining nondegeneracy estimates is to construct proper supersolutions in $\mathbb{R}^n$.

%%%%%%%%%%%%%%%%%%%%%%%%%%%%%%%%%%%%
\begin{proposition}\label{barrierprop1}
There exist positive constants $c$ and $\sigma$ depending only on $\gamma$, such that, for each $\eta\geq 1$ given, there exists a radially symmetric function $\Phi_\eta \in C^{1,1}(\mathbb{R}^n)$, satisfying
\begin{equation}\label{barrier1}
\Delta_\infty \Phi_\eta \leq\mathcal{B}(\Phi_\eta)\,\Phi_\eta^{-\gamma}
\end{equation}
pointwise in $\mathbb{R}^n$, where
\begin{equation}\label{barrier2}
\Phi_\eta\equiv c \; \mbox{ in } B_{\sigma\eta} \quad\quad \mbox{and} \quad\quad \Phi_\eta\geq c\, \eta^\alpha \; \mbox{ in } \mathbb{R}^n \setminus B_{\eta}.
\end{equation}
\end{proposition}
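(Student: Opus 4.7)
I would construct $\Phi_\eta$ as a piecewise radial profile $\Phi_\eta(x)=g(|x|)$ glued from three elementary blocks---constant, quadratic, affine---chosen to be $C^{1,1}$. The guiding observation is the algebraic identity $(3+\gamma)\alpha=4$, equivalently $3\alpha-4=-\alpha\gamma$, which produces a scale-invariant bound on the quadratic piece uniformly in $\eta$.

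Fix a small universal $\sigma\in(0,1)$ and set
\[
C_\eta:=\frac{\delta(\eta^\alpha-1)}{(1-\sigma)^2\eta^2},\qquad g(r):=\begin{cases}\delta,&0\le r\le\sigma\eta,\\ \delta+C_\eta(r-\sigma\eta)^2,&\sigma\eta\le r\le\eta,\\ \delta\eta^\alpha+2C_\eta(1-\sigma)\eta(r-\eta),&r\ge\eta.\end{cases}
\]
By construction, at $r=\sigma\eta$ both $g$ and $g'$ vanish on either side, and at $r=\eta$ the quadratic reaches $\delta\eta^\alpha$ with derivative $2C_\eta(1-\sigma)\eta$ matching the affine branch. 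Hence $g'$ is continuous and piecewise linear with bounded slope, so $g\in C^{1,1}([0,\infty))$; since $g$ is constant near the origin, the radial extension $\Phi_\eta$ lies in $C^{1,1}(\mathbb{R}^n)$. The two geometric requirements $\Phi_\eta\equiv\delta$ on $B_{\sigma\eta}$ and $\Phi_\eta\ge\delta\eta^\alpha$ on $\mathbb{R}^n\setminus B_\eta$ (the latter because the affine branch is strictly increasing from the value $\delta\eta^\alpha$) follow immediately.

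The core step is to verify $\Delta_\infty\Phi_\eta\le\mathcal{B}(\Phi_\eta)\Phi_\eta^{-\gamma}$ pointwise a.e., using the radial formula $\Delta_\infty\Phi_\eta=(g')^2 g''$. On the constant and affine pieces $g''=0$, and $\Phi_\eta\ge\delta$ forces $\mathcal{B}(\Phi_\eta)=1$, so the inequality is trivial. On the quadratic piece $(g')^2g''=8C_\eta^3(r-\sigma\eta)^2$; a direct differentiation of $8C_\eta^3 s^2(\delta+C_\eta s^2)^\gamma$ in $s=r-\sigma\eta$ shows the ratio of the two sides is increasing in $s$, hence maximized at $s=(1-\sigma)\eta$, where
\[
\frac{(g')^2g''}{g^{-\gamma}}\bigg|_{r=\eta}\le\frac{8\delta^{3+\gamma}(\eta^\alpha-1)^3\eta^{\alpha\gamma}}{(1-\sigma)^4\eta^4}\le\frac{8\delta^{3+\gamma}\eta^{(3+\gamma)\alpha-4}}{(1-\sigma)^4}=\frac{8\delta^{3+\gamma}}{(1-\sigma)^4},
\]
using $(\eta^\alpha-1)^3\le\eta^{3\alpha}$ together with the identity $(3+\gamma)\alpha=4$. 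The inequality then holds provided $\delta$ satisfies the universal smallness condition $8\delta^{3+\gamma}\le(1-\sigma)^4$, consistent with the ``$\delta$ universally small'' assumption used downstream in Theorem \ref{nondegthm}.

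The main obstacle is reconciling $C^{1,1}$ regularity at the inner junction $r=\sigma\eta$ with the supersolution inequality uniformly in $\eta\ge1$. The ``natural'' power-law profile $\delta+A(r-\sigma\eta)^\alpha$, which exactly solves $(f')^2f''=f^{-\gamma}$, fails to be $C^{1,1}$ because $\alpha<2$ makes $f''$ blow up at the junction; replacing it by a quadratic costs equality in the PDE, but the scale-invariant bound above shows the supersolution deficit is absorbed by a universal smallness condition on $\delta$ that does not deteriorate as $\eta\to\infty$.
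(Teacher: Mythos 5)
Your construction is correct and follows the same three-piece strategy as the paper: a constant core on $B_{\sigma\eta}$, a quadratic transition layer, and an outer tail, glued $C^{1,1}$, with the supersolution inequality verified on each piece via the radial identity $\Delta_\infty\Phi_\eta=(g')^2g''$ and the exponent identity $(3+\gamma)\alpha=4$. Where you depart from the paper is in the outer region $|x|\geq\eta$: the paper continues with a power law $2\delta|x|^\alpha+D$ (which forces the specific choice $\sigma=1-1/\alpha$ so that the $C^1$ gluing conditions and the choice of $D$ are mutually consistent, and which requires a second computation $\Delta_\infty\Phi_\eta=8\delta^3\alpha^3(\alpha-1)|x|^{-\alpha\gamma}\leq (2\delta)^{-\gamma}|x|^{-\alpha\gamma}$ there), whereas you use the affine continuation with matching slope. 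Your choice buys two small simplifications: the affine tail has $g''\equiv 0$, so the inequality is trivial there, and $\sigma\in(0,1)$ can be taken arbitrary rather than pinned to $1-1/\alpha$, with the whole burden transferred to a single scale-invariant bound $8\delta^{3+\gamma}\leq(1-\sigma)^4$ on the quadratic layer. The paper's power-law tail grows like $|x|^\alpha$ while yours grows linearly, but since the only property used downstream (Proposition~\ref{ebarrierprop}, Theorem~\ref{nondegthm}) is $\Phi_\eta\geq\delta\eta^\alpha$ outside $B_\eta$, both work equally well. Your monotonicity observation that $s\mapsto 8C_\eta^3 s^2(\delta+C_\eta s^2)^\gamma$ is increasing (hence maximized at $s=(1-\sigma)\eta$) plays the same role as the paper's direct estimate $(|x|-\sigma\eta)^2\leq(1-\sigma)^2\eta^2$ and $\Phi_\eta\leq 2\delta\eta^\alpha$ on the quadratic layer; both lead to the same universal smallness requirement on $\delta$.
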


\begin{proof}
We define $\Phi_\eta$ by
$$
\Phi_\eta(x)=\left\{
\begin{array}{ccccc}
c & \mbox{for} & |x| \leq \sigma\eta \\[0.19cm]
A\left(|x|-\sigma\eta\right)^2 + c &\mbox{for} & \sigma\eta < |x|\leq \eta \\ [0.21cm]
2\delta|x|^{\alpha} + D & \mbox{for} &\eta < |x|,
\end{array}
\right.
$$
where $A, D, c$ and $\sigma$ are constants to be chosen later. Our first goal is to select parameters such that $\Phi_\eta \in C^{1}(\mathbb{R}^n)$. Note that it holds at points where $|x|=\sigma \eta$. For points $|x|=\eta$, we assume
$$
A(1-\sigma)^2\eta^2 + c= 2c\,\eta^\alpha +D,
$$
and so,
\begin{equation}\nonumber
A=\frac{1}{(1-\sigma)^2}\left[2c\,\eta^{\alpha-2} +\eta^{-2}(D-c)\right].
\end{equation}
In order to have continuity for $D\Phi_\eta$ along $|x|=\eta$, we take
\begin{equation}\nonumber
A=\frac{c\,\alpha}{1-\sigma}\eta^{\alpha-2}.
\end{equation}
Therefore, from the two last identities, one concludes
\begin{equation}\label{barriereq1}
\frac{c \,\alpha}{1-\sigma}\eta^{\alpha-2} = \frac{1}{(1-\sigma)^2}\left[2c\,\eta^{\alpha-2} +\eta^{-2}(D-c)\right].
\end{equation}
In particular, taking $D:=c(1-\eta^\alpha)$ in \eqref{barriereq1}, we only have to guarantee that
$$
\frac{c\,\alpha}{1-\sigma}\eta^{\alpha-2} = \frac{1}{(1-\sigma)^2}c\,\eta^{\alpha-2},
$$
which is true for
$$
\sigma := 1-\frac{1}{\alpha}=\frac{1-\gamma}{4}\in (0,\tfrac 14).
$$
\smallskip

Thus,
$$
\Phi_\eta(x)=\left\{
\begin{array}{ccccc}
c\, & \mbox{for} & 0 \leq |x| \leq \sigma\eta \\[0.2cm]
c\, \left[\alpha^2\eta^{\alpha-2}\left(|x|-\sigma\eta\right)^2 +1\right] &\mbox{for} & \sigma\eta\leq |x|\leq \eta \\ [0.2cm]
c\, \left[2\left(|x|^{\alpha} - \dfrac{\eta^2}{2} \right) + 1\right]& \mbox{for} &\eta \leq |x|.
\end{array}
\right.
$$
We then show that $\Phi_\eta$ satisfies \eqref{barrier1}.
\smallskip
Taking $c=\delta$, we easily check that
$$
\Delta_\infty \Phi_\eta(x)=0< c^{-\gamma}=\mathcal{B}(\Phi_\eta(x))(\Phi_\eta(x))^{-\gamma},
$$
holds for points $|x|\leq \sigma\eta$.
\smallskip
Next, for points $\sigma\eta\leq |x|\leq \eta$, we have
\begin{eqnarray*}
\Delta_\infty\Phi_\eta(x) & = & \Phi_\eta''(|x|)\Phi_\eta'(|x|)^2\\[0.1cm]
&=& 8c^3\alpha^6\eta^{3(\alpha-2)} \left(|x|-\sigma\eta\right)^2.
\end{eqnarray*}
Now, using the fact that $3\alpha-4=-\gamma \alpha$, we obtain
\begin{equation}\label{barriereq2}
\Delta_\infty\Phi_\eta(x) \leq 8c^3\alpha^4\eta^{-\alpha\gamma}.
\end{equation}
Since $\eta \geq 1$, we get
$$
c\leq \Phi_\eta(x)\leq c \left(\eta^\alpha +1\right) \leq 2c\, \eta^\alpha.
$$
In addition, we notice that $\mathcal{B} \equiv 1$ in $[\,\delta, \infty)$, and so
\begin{equation}\label{barriereq3}
\mathcal{B}(\Phi_\eta(x))\Phi_\eta(x)^{-\gamma} \geq (2c)^{-\gamma}\eta^{-\alpha\gamma}.
\end{equation}
Therefore, using \eqref{barriereq2} together with \eqref{barriereq3} and taking
$$
\delta=\delta_\gamma:=\frac{1}{2} \sqrt[3+\gamma]{\frac{1}{\alpha^4}},
$$
we conclude that \eqref{barrier1} holds for this region.
\smallskip
Finally, if $|x|\geq \eta$, direct computation gives
\begin{equation}\label{barriereq4}
\Delta_\infty \Phi_\eta(x)= 8c^3\alpha^3(\alpha-1)|x|^{-\alpha\gamma}.
\end{equation}
Also, since $\eta\geq 1$, we have
\begin{equation}\label{barriereq5}
\mathcal{B}(\Phi_\eta(x))\Phi_\eta(x)^{-\gamma} \geq c^{-\gamma}\left(2\left(|x|^\alpha-\frac{\eta^\alpha}{2}\right)+1\right)^{-\gamma} \geq (2c)^{-\gamma}|x|^{-\alpha\gamma}.
\end{equation}
Since
$$
c=\delta_\gamma \leq \frac{1}{2} \sqrt[3+\gamma]{\frac{1}{\alpha^3(\alpha-1)}},
$$
we use \eqref{barriereq4} and \eqref{barriereq5} to derive \eqref{barrier1}.
\smallskip
In conclusion, \eqref{barrier2} holds and the proof of Proposition \ref{barrierprop1} is complete.
\end{proof}

Next, for \eqref{Pe}, we prove existence of a radial supersolution $\Phi_\varepsilon$. That, combined with the minimality of $u_\varepsilon$, gives nondegeneracy.

%%%%%%%%%%%%%%%%%%%%%%%%%%%%%%%%%%%%%
\begin{proposition}\label{ebarrierprop}
For each $\varepsilon>0$ and $r\geq\varepsilon$, the radially symmetric function
$$
\Phi_\varepsilon(x):=\varepsilon^\alpha\Phi_{\frac r \varepsilon}\left(\frac{x}{\varepsilon}\right), \quad x \in \mathbb{R}^n
$$
is a supersolution of \eqref{Pe}. Moreover,
\begin{equation}\label{ebarrier}
\Phi_\varepsilon\equiv c\,\varepsilon^\alpha \quad \mbox{ in} \; B_{\sigma r} \quad \mbox{ and } \quad \Phi_\varepsilon\geq c\, r^\alpha \quad \mbox{in } \mathbb{R}^n\setminus B_{r}.
\end{equation}
\end{proposition}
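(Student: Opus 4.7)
The strategy is a clean rescaling of the unit-scale profile $\Phi_\eta$ supplied by Proposition \ref{barrierprop1}, invoking the scaling invariance \eqref{scaling} (whose derivation extends verbatim from solutions to super-solutions). Observe first that the hypothesis $r \geq \varepsilon$ gives $\eta := r/\varepsilon \geq 1$, so Proposition \ref{barrierprop1} produces a radial $\Phi_\eta \in C^{1,1}(\mathbb{R}^n)$ satisfying $\Delta_\infty \Phi_\eta \leq \mathcal{B}(\Phi_\eta)\Phi_\eta^{-\gamma}$ pointwise, with $\Phi_\eta \equiv \delta$ on $B_{\sigma\eta}$ and $\Phi_\eta \geq \delta\eta^\alpha$ outside $B_\eta$. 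This is the only input we need from the previous proposition.

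Next, I would verify the differential inequality for $\Phi_\varepsilon(x) := \varepsilon^\alpha \Phi_\eta(x/\varepsilon)$ by a chain-rule computation. One gets $D\Phi_\varepsilon(x) = \varepsilon^{\alpha-1}(D\Phi_\eta)(x/\varepsilon)$ and $D^2\Phi_\varepsilon(x) = \varepsilon^{\alpha-2}(D^2\Phi_\eta)(x/\varepsilon)$, hence
\[
\Delta_\infty \Phi_\varepsilon(x) \;=\; \varepsilon^{\,3\alpha-4}\,\Delta_\infty \Phi_\eta(x/\varepsilon).
\]
The crucial algebraic identity is $3\alpha - 4 = -\alpha\gamma$, which follows directly from $\alpha = 4/(3+\gamma)$ as in \eqref{alpha}; this is precisely the scaling exponent that makes the problem self-similar. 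Combining with the super-solution bound on $\Phi_\eta$ and the definition $\mathcal{B}_\varepsilon(s) = \mathcal{B}(s/\varepsilon^\alpha)$, which yields the identity $\mathcal{B}(\Phi_\eta(x/\varepsilon)) = \mathcal{B}_\varepsilon(\Phi_\varepsilon(x))$, I obtain
\[
\Delta_\infty \Phi_\varepsilon(x) \;\leq\; \varepsilon^{-\alpha\gamma}\,\mathcal{B}_\varepsilon(\Phi_\varepsilon(x))\,\bigl(\varepsilon^{-\alpha}\Phi_\varepsilon(x)\bigr)^{-\gamma} \;=\; \mathcal{B}_\varepsilon(\Phi_\varepsilon(x))\,\Phi_\varepsilon(x)^{-\gamma},
\]
pointwise wherever $\Phi_\eta$ is twice differentiable. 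Since $\Phi_\eta$ is $C^{1,1}$ and piecewise smooth, this suffices to conclude that $\Phi_\varepsilon$ is a viscosity super-solution of \eqref{Pe} (the matching of first derivatives at $|x|=\sigma r$ and $|x|=r$ was arranged in Proposition \ref{barrierprop1} precisely to allow test functions to touch $\Phi_\varepsilon$ from below only at interior points of one of the smooth pieces).

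The two pointwise properties in \eqref{ebarrier} are then immediate: $|x| \leq \sigma r$ is equivalent to $|x/\varepsilon| \leq \sigma\eta$, so $\Phi_\varepsilon(x) = \varepsilon^\alpha \cdot \delta = \delta\varepsilon^\alpha$ there; and $|x| > r$ is equivalent to $|x/\varepsilon| > \eta$, whence $\Phi_\varepsilon(x) \geq \varepsilon^\alpha \cdot \delta\eta^\alpha = \delta r^\alpha$. There is no substantive obstacle in this argument; the entire content is the identity $3\alpha-4=-\alpha\gamma$, which encodes why $\alpha$ was chosen as in \eqref{alpha}, and the careful bookkeeping between the approximating densities $\mathcal{B}$ at unit scale and $\mathcal{B}_\varepsilon$ at scale $\varepsilon$.
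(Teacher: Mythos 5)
Your argument is correct and follows the same route as the paper: set $\eta = r/\varepsilon \geq 1$, invoke Proposition~\ref{barrierprop1}, and rescale using the identity $3\alpha - 4 = -\alpha\gamma$ together with $\mathcal{B}_\varepsilon(\Phi_\varepsilon(x)) = \mathcal{B}(\Phi_\eta(x/\varepsilon))$; the paper's proof is simply a terse pointer to the scaling computation \eqref{scaling}, whereas you have spelled it out. One inaccuracy worth flagging: your parenthetical claim that $C^1$ matching forces test functions to touch $\Phi_\varepsilon$ from below \emph{only} at interior points of the smooth pieces is false (a constant test function touches the flat core along the whole sphere $|x|=\sigma r$). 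The correct reason the viscosity supersolution property survives across the interfaces is that $\Phi_\eta$ is $C^{1,1}$ and, at each kink, a $C^2$ test touching from below has its pure second derivative in the radial direction dominated by the \emph{smaller} of the two one-sided values of $f''$, for which the pointwise inequality has already been checked (at $|x|=\sigma\eta$ the gradient vanishes so $\Delta_\infty\psi = 0$; at $|x|=\eta$ the smaller side is the exterior piece, already verified). This is a local repair, not a gap in the overall strategy.
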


\begin{proof}
Take $\eta= {r}/{\varepsilon}\geq 1$ in Proposition \ref{barrierprop1}. Arguing as in \eqref{scaling}, we conclude that $\Phi_\varepsilon$ is a supersolution of \eqref{Pe}. We also note that \eqref{ebarrier} follows directly from \eqref{barrier2}.
\end{proof}

Finally, we are ready to prove the main result of this section.

%%%%%%%%%%%%%%%%%%%%%%%%%%%%%%%%%%%%%
\begin{proof}[Proof of Theorem \ref{nondegthm}]
For simplicity, we prove \eqref{nondegest} for $0 \in \{u_\varepsilon > c\, \varepsilon^\alpha\}$. By continuity, we extend such estimate for $\{u_\varepsilon \geq c\, \varepsilon^\alpha\}$. Let $\varepsilon \leq r \leq \dist(0,\partial\Omega)$ and consider $\Phi_\varepsilon$ as in Proposition \ref{ebarrierprop}. The minimality of $u_\varepsilon$, implies that there is $\xi_r\in \partial{B_r}$ such that
\begin{equation}\label{minimclaim}
u_\varepsilon(\xi_r)\geq \Phi_\varepsilon(\xi_r).
\end{equation}
In fact, if $u_\varepsilon < \Phi_\varepsilon$ in $\partial B_r$, set
$$
\omega_\varepsilon :=
\left\{
\begin{array}{cl}
\min\left\{u_\varepsilon, \Phi_\varepsilon\right\} & \mbox{in} \; \overline{B_r}\\
u_\varepsilon & \mbox{in} \; \Omega \setminus \overline{B_r},
\end{array}
\right.
$$
Note that $\omega_\varepsilon$ is a supersolution of \eqref{Pe}, and $\omega_\varepsilon=\varphi$ on $\partial\Omega$. Since $\omega_\varepsilon= u_\varepsilon$ on $\partial B_r$, we have that $\omega_\varepsilon$ is continuous in $\overline \Omega$. Hence, as in Theorem \ref{ARTexistence}, we conclude that $\omega_\varepsilon \in \mathcal{S}$. On the other hand,
$$
u_\varepsilon (0) > c\, \varepsilon^{\alpha}=\Phi_\varepsilon (0) = \omega_\varepsilon (0),
$$
which contradicts the minimality of $u_\varepsilon$. Therefore, from \eqref{minimclaim}, we obtain
$$
\sup\limits_{B_r}u_\varepsilon \geq \sup\limits_{\partial B_r}u_\varepsilon \geq u_\varepsilon(\xi_r)\geq \Phi_\varepsilon(\xi_r) \geq c\, r^{\alpha}.
$$
\end{proof}

\medskip

%%%%%%%%%%%%%%%%%%%%%%%%%%%%%%%%%%%%%%%%%%%%%%%
\section{The limit free boundary problem}\label{limitsec}
%%%%%%%%%%%%%%%%%%%%%%%%%%%%%%%%%%%%%%%%%%%%%%%
In this section, we analyse the limit free boundary problem of\eqref{Pe}. From Theorem \ref{Peexistence}, the family $\{u_\varepsilon\}_{\varepsilon>0}$ is bounded in $L^\infty(\Omega)$ and, by Proposition \ref{regforsub}, is pre-compact in $C^{0,1}_{loc}(\Omega)$ topology. Therefore, up to a subsequence,
\begin{equation}\label{limit2}
u_{\varepsilon} \longrightarrow u \quad \mbox{ as }\; \varepsilon \to 0.
\end{equation}
The next result reveals that $u$ is a viscosity solution of problem \eqref{P0}.
\begin{proposition}
The limit function $u$ in \eqref{limit2} is a viscosity solution of
\begin{equation}\label{limitpde}
\Delta_\infty u = u^{-\gamma} \quad \mbox{in }\; \{u>0\}.
\end{equation}
\end{proposition}
\begin{proof}
Let $u(y)=:\iota>0$, $y\in \Omega$. By continuity, we can find a small radius $\varrho$ such that
$$
u\geq \frac{\iota}{2} \quad \mbox{in}\; B_\varrho(y).
$$
Since $u_\varepsilon\rightarrow u$ uniformly over compact sets, for any $\varepsilon$ small enough, one has
$$
u_\varepsilon \geq \frac{\iota}{4} >\delta\varepsilon^\alpha.
$$
Therefore,
$$
\Delta_\infty u_\varepsilon={u_\varepsilon}^{-\gamma} \quad\mbox{in}\quad B_{\frac{\varrho}{2}}(y).
$$
The stability of viscosity solutions under uniform limits implies that $u$ solves \eqref{limitpde} in the viscosity sense.
\end{proof}

%%%%%%%%%%%
\subsection*{Proofs of the main results}
As a consequence of the uniform estimates obtained in Theorem \ref{mainthme} and Theorem \ref{nondegthm}, we obtain optimal regularity estimates for limit solutions.

\begin{proof}[Proof of Theorem \ref{mainthm}]
By continuity, it is enough to verify that \eqref{optgrowth} holds for $x \in \{u< \theta r^\alpha\}$, for $\theta$ close to $1$. Since $u_\varepsilon \to u$ uniformly over compact sets, for radius $r>0$, we can find a parameter $\varepsilon_r>0$ and a subdomain $\Omega' \Subset \Omega$, with $x \in \Omega'$, such that
$$
\sup_{\Omega'}|u-u_\varepsilon| < (1-\theta)r^\alpha,
$$
for each $0<\varepsilon \leq \varepsilon_r$. Consequently,
$$
u_\varepsilon(x) \leq |u_\varepsilon(x)-u(x)| + u(x) < r^\alpha.
$$
Hence, thanks to Theorem \ref{mainthme}, for some universal small parameter $r_0$, we derive
$$
\sup_{B_r(x)}u_\varepsilon \leq Cr^\alpha
$$
for any $r \leq r_0$ such that $B_r(x) \subset \Omega'$. Therefore, if $x \in \{u< \theta r^\alpha\}$, then
$$
\sup_{B_r(x)}u \leq \sup_{\Omega'}|u-u_\varepsilon| + \sup_{B_r(x)}u_\varepsilon \leq ((1-\theta) + C) r^\alpha.
$$
Observe that for each $x \in \partial\{u>0\}$ and unit vectors $e_i \in \mathbb{R}^n$, we have
$$
0 \leq \frac{u(x+te_i)}{t} \leq \sup\limits_{z \in B_t(x)}\frac{u(z)}{t} \leq C\,t^{\alpha-1} \to 0 \quad \mbox{ as } \; t \to 0^+,
$$
In particular, this guarantees that, at free boundary points, $u$ is differentiable and satisfies $Du=0$. Then $C^{1,\alpha-1}$ regularity estimates easily follow from estimate \eqref{optgrowth}.
\end{proof}

Now, we prove nondegeneracy of limit solutions.

\begin{proof}[Proof of Theorem \ref{nondeglim}]
By continuity it suffices to verify that \eqref{nondeg1} holds for points $x \in \{u>0\}$. For a given $r < \tfrac{1}{2}\dist(x,\partial\Omega)$, note that for $\varepsilon \leq \varepsilon_r \leq r$ sufficiently small, we have $x \in \{u_\varepsilon>c\,\varepsilon^\alpha\}$. Therefore, from Theorem \ref{nondegthm}, we obtain
$$
\sup_{B_r(x)}u_\varepsilon\geq c\, r^\alpha,
$$
for any $ \varepsilon \leq r \leq \tfrac{1}{2}\dist(x,\partial\Omega)$. By uniform Lipschitz regularity estimates for $\{u_\varepsilon\}$, we get \eqref{nondeg1} by letting $\varepsilon\rightarrow 0$.
\end{proof}

\smallskip

%%%%%%%%%%%%%%%%%%%%%%%%%%%%%%%%%%%%%%%%%%
\section{Geometric measure estimates for the free boundary}\label{geom}
An important consequence of the previous results is uniformly density of the positivity set $\{u>0\}$, as well as $(n-\epsilon)$-Hausdorff measure estimates of the free boundary $\partial\{u>0\}$, see \eqref{hauss}.

%%%%%%%%%%%%%%%
\begin{corollary}[Uniform density estimates]\label{densityP0}
Given a subdomain $\Omega'\Subset\Omega$, there exists a positive constant $c_\star$, depending only on universal parameters, such that for $x\in \Omega'\cap \{u>0\}$ and $0<\kappa < \dist(\Omega',\partial \Omega)$, there holds
$$
\frac{\mathcal{L}^n\left(B_\kappa(x)\cap\{u>0\}\right)}{\mathcal{L}^n\left(B_\kappa(x)\right)}\geq c_\star.
$$
\end{corollary}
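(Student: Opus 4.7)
The plan is to execute the classical free-boundary density argument, coupling the uniform nondegeneracy (Proposition \ref{nondeglim}) with the uniform growth estimate near the zero set (Proposition \ref{growthlim}) in order to exhibit, inside $B_\kappa(x)$, a ball of positivity whose radius is comparable to $\kappa$. Throughout, I would restrict to $\kappa$ below a universal threshold controlled by the $r_0$ of Proposition \ref{growthlim}; the remaining range is then handled by a routine covering of $B_\kappa(x)$ by sub-balls at the universal scale. Note that although the statement assumes $x\in\{u>0\}$, the whole argument carries through for any $x\in\overline{\{u>0\}}\cap\Omega'$, since nondegeneracy is available there.

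The argument would proceed in two steps. First, I would apply Proposition \ref{nondeglim} at $x$ with radius $\kappa/2$, which is admissible since $\kappa/2<\tfrac12\dist(x,\partial\Omega)$, producing a point
$$
y\in\overline{B_{\kappa/2}(x)}\quad\text{with}\quad u(y)\geq c_1\kappa^\alpha,
$$
for a universal $c_1>0$. Second, I would show that a ball of radius $r_\ast\sim\kappa$ around $y$ lies in $\{u>0\}$ by contradiction: if $z\in B_r(y)$ satisfies $u(z)=0$, then $z\in\{u<(2r)^\alpha\}$ trivially, and Proposition \ref{growthlim} applied at $z$ at scale $2r$ yields
$$
u(y)\leq\sup_{B_{2r}(z)}u\leq C(2r)^\alpha,
$$
so comparison with the lower bound on $u(y)$ forces $r\geq c_0\kappa$ for a universal $c_0>0$. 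Choosing $r_\ast:=\min\{c_0,\tfrac12\}\kappa$ (and $\kappa$ small enough so the growth estimate is applicable) rules out any zero of $u$ inside $B_{r_\ast}(y)$, hence $B_{r_\ast}(y)\subset\{u>0\}$. Since $|y-x|\leq\kappa/2$ and $r_\ast\leq\kappa/2$, the triangle inequality yields $B_{r_\ast}(y)\subset B_\kappa(x)$, and the density estimate
$$
\mathcal{L}^n\!\left(B_\kappa(x)\cap\{u>0\}\right)\geq\mathcal{L}^n(B_{r_\ast}(y))\geq c\,\mathcal{L}^n(B_\kappa(x))
$$
follows with a universal constant $c>0$.

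The only genuinely non-trivial step is the propagation of positivity from the single point $y$ to a full ball of radius comparable to $\kappa$. This rests decisively on the \emph{uniform} flattening $\sup_{B_r}u\leq Cr^\alpha$ near $\{u=0\}$, which is the limiting form of the floating level-set estimate of Theorem \ref{mainthm} transported to the free boundary problem \eqref{P0}. Without this scale-invariant control, a zero of $u$ could in principle sit arbitrarily close to $y$ without contradicting the lower bound $u(y)\gtrsim\kappa^\alpha$, and the density argument would collapse; the sharp exponent $\alpha=\tfrac{4}{3+\gamma}$ plays no role beyond providing matching scales between the nondegeneracy and the growth.
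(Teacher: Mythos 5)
Your proposal is correct and follows essentially the same route as the paper: nondegeneracy (Proposition \ref{nondeglim}) produces a point with $u\gtrsim\kappa^\alpha$, the growth estimate (Proposition \ref{growthlim}) applied at the nearest zero gives an upper bound $\lesssim d^\alpha$, and comparing the two forces a ball of radius $\sim\kappa$ of positivity inside $B_\kappa(x)$. The only cosmetic difference is that you apply nondegeneracy at scale $\kappa/2$ to guarantee $B_{r_\ast}(y)\subset B_\kappa(x)$ outright, while the paper applies it at scale $\kappa$ and then just measures the intersection $B_\kappa(x)\cap B_{\tau\kappa}(z_0)$; you are also slightly more explicit about the range $\kappa>r_0$, which the paper glosses over.
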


%%%%%%%%%%%%
\begin{proof}
Initially, for $x \in \{u > 0\} \cap \Omega'$, we consider parameter $0<\kappa \leq \dist(\Omega',\partial\Omega)$. Theorem \ref{nondeglim} guarantees that there exists $z_0 \in \overline{B_{\kappa}(x)}$ such that
\begin{equation}\label{dens2}
u_\varepsilon(z_0) \geq c \kappa^\alpha.
\end{equation}
Set
$$
d:=\dist(z_0, \partial\{u>0\}).
$$
We claim that there exists universal $\tau \in (0,1)$ such that $B_{\tau\kappa}(z_0) \Subset B_d(z_0)$. In fact, if $z' \in \partial\{u>0\}$ is such that $|z'-z_0|=d$, from \eqref{dens2} and Theorem \ref{mainthm}, one has
\begin{equation}\label{dens3}
c \kappa^\alpha \leq u(z_0) \leq \sup\limits_{B_{d}(z')} u \leq Cd^{\,\alpha}.
\end{equation}
Therefore, choosing any
$$
\tau < \left(\frac{c}{C} \right)^{\frac{1}{\alpha}},
$$
and using \eqref{dens3} we conclude the proof of the claim. Hence,
\begin{equation}\label{porous1}
B_{\tau\kappa}(z_0) \subset \{u> 0\}
\end{equation}
which implies,
\begin{equation}\label{porous2}
|B_\kappa(x) \cap \{u > 0\} | \geq |B_\kappa(x) \cap B_{\tau\kappa}(z_0)| \geq c_\star \kappa^n,
\end{equation}
for some $c_\star>0$ universal.
\end{proof}

Recalling the definition of porous set, see for example \cite{KR}, we notice - from the ideas in the proof of Theorem \ref{densityP0}, especially from \eqref{porous1} and \eqref{porous2} - that $\partial\{u>0\}$ is a porous set with porosity $\tau$, for each
$$
0<\tau < \min\left\{ \frac{1}{2},\left(\frac{c}{C}\right)^{\frac{1}{\alpha}} \right\}.
$$
In fact, by definition, a subset $E \subset \mathbb{R}^n$ is called $\tau$-\textit{porous}, if for each small radii $\rho>0$ and each point $x \in E$ there exists $y \in \mathbb{R}^n$ such that $B_{\tau\rho}(y) \subset B_{\rho}(x) \setminus E$. In addition, $\sigma$-porous set has Hausdorff dimension not exceeding $n-\epsilon \tau^n$, for $\epsilon$ depending only on dimension, see for instance \cite[Theorem 2.1]{KR}. Therefore, for any $\Omega' \Subset \Omega$, we have
\begin{equation}\label{hauss}
\mathcal{H}^{n-\epsilon \tau^n}(\partial\{u>0\} \cap \Omega')< \infty.
\end{equation}

%%%%%%%%%%%%%%%%%%%%%%%%%%%%%%%%%%%%%%%%%%%%%%
%%%%%%%%%%%%%%%%%%%%%%%%%%%%%%%%%%%%%%%%%%%%%%
\section{Optimality via a radial example}\label{radial}

Finally, we highlight the optimality of our results by exhibiting radial limit solutions of \eqref{P0}. For $C,R>0$ satisfying the compatibility condition
$$
R= \left(\frac{C}{C_\alpha} \right)^{\frac{1}{\alpha}}, \quad \mbox{where} \quad C_\alpha:=\left(\frac{1}{\alpha^3 (\alpha-1)}\right)^{\frac{1}{3+\gamma}}.
$$
We have that the function
$$
\omega(x)=C_\alpha |x|^\alpha
$$
is a limit solution of \eqref{P0} for $\Omega=B_R$ and $\varphi=C$ on $\partial\Omega$. Indeed, we look at radial solutions of \eqref{Pe} with constant boundary datum. Thus, we consider the following one-dimensional problem,
\begin{equation}\label{radialeq}
\begin{array}{rcc}
\omega''(\omega')^2 = \mathcal{B}_\varepsilon(\omega)\, \omega^{-\gamma} & \mbox{in} & (0,R) \\[0.1cm]
\omega \geq 0 & \mbox{in} & (0,R).
\end{array}
\end{equation}
For each $s \in (0,R)$, set
$$
\omega_\varepsilon(s) := C_\alpha\left( s+\varepsilon \right)^\alpha.
$$
Assuming $\delta$ as in Section \ref{singprob} and $\delta \leq C_\alpha$, one has
\begin{equation}\label{radialeq2}
\omega_\varepsilon''(\omega_\varepsilon')^2=\omega_\varepsilon^{-\gamma} = \mathcal{B}_\varepsilon(\omega)\, \omega^{-\gamma} \quad \mbox{in } \; (0,R).
\end{equation}
Hence, taking $\omega_\varepsilon(x):= \omega_\varepsilon(|x|)$ for $x \in B_R$, we conclude that $\omega_\varepsilon$ is the minimal Perron’s solution of \eqref{Pe} in $B_R$ satisfying $\omega_\varepsilon \equiv C_\alpha(R+\varepsilon)^\alpha$ on $\partial B_R$. Therefore, by letting $\varepsilon \to 0$, we get $\omega_\varepsilon \to \omega$.
In conclusion, we observe that $\omega$ is a limit solution of \eqref{P0} in $B_R$, satisfies $\inf_{B_R}\omega_\varepsilon=\omega_\varepsilon(0)=C_\alpha \varepsilon^\alpha$ and the approximating $\varepsilon$-level set coincides with the limit singular set, i.e.,
\begin{equation}\label{inf}
\partial \{ \omega_\varepsilon > C_\alpha \varepsilon^\alpha\}= \{0\}=\partial \{\omega>0\}.
\end{equation}

\medskip

%%%%%%%%%%%%%%
\subsection*{Acknowledgements.}
The authors would like to thank the anonymous referee for the careful and detailed reading and for your valuable suggestions and corrections.

This paper is part of the second author's Ph.D. thesis. GSS acknowledges support from CAPES-Brazil and would like to thank the Department of Mathematics at Universidade Federal da Para\'iba for the pleasant and productive period during his Ph.D. program at that institution. DJA thanks the Abdus Salam International Centre for Theoretical Physics (ICTP) for the great hospitality during his research visits. DJA and GSS are partially supported by CNPq and grant 2019/0014 Para\'iba State Research Foundation (FAPESQ).

\bibliographystyle{amsplain, amsalpha}

\end{document}